\numberwithin{equation}{section}
   \theoremstyle{plain}
\newtheorem{theorem}[subsection]{Theorem}
\newtheorem{lemma}[subsection]{Lemma}
\newtheorem{corollary}[subsection]{Corollary}
\newtheorem{definition}[subsection]{Definition}
\newtheorem*{mainthm3-repeat}{Theorem \ref{mainthm3}}
\newtheorem*{girth-repeat}{Lemma \ref{girth}}
\newtheorem*{jones-result-repeat}{Lemma \ref{jones-result}}
\newtheorem*{zariski-dense-repeat}{Lemma \ref{zariski-dense}}
\renewcommand{\leq}{\leqslant}
\renewcommand{\geq}{\geqslant}
\newsavebox{\proofbox}
\savebox{\proofbox}{\begin{picture}(7,7)  \put(0,0){\framebox(7,7){}}\end{picture}}
\newcommand\E{\mathbb{E}}
\newcommand\Z{\mathbb{Z}}
\newcommand\N{\mathbb{N}}
\newcommand\SL{\operatorname{SL}}
\newcommand\Cay{\operatorname{Cay}}
\newcommand\Sz{\operatorname{Sz}}
\newcommand\Sp{\operatorname{Sp}}
\newcommand\tr{\operatorname{tr}}
\renewcommand\P{\mathbb{P}}
\newcommand\F{\mathbb{F}}
\newcommand\eps{\varepsilon}
\newcommand\id{\operatorname{id}}
\begin{document}
\title[Suzuki Groups as expanders]{Suzuki Groups as expanders}

\author{Emmanuel Breuillard}
\address{Laboratoire de Math\'ematiques
Universit\'e Paris-Sud 11, 91405 Orsay cedex,
France }
\email{emmanuel.breuillard@math.u-psud.fr}
\author{Ben Green}
\address{Centre for Mathematical Sciences\\
Wilberforce Road\\
Cambridge CB3 0WA\\
England }
\email{b.j.green@dpmms.cam.ac.uk}
\author{Terence Tao}
\address{UCLA Mathematics Department,
Los Angeles, CA 90095-1555, USA}
\email{tao@math.ucla.edu}

\begin{abstract}
We show that pairs of generators for the family $\Sz(q)$ of Suzuki groups may be selected so that the corresponding Cayley graphs are expanders. By combining this with several deep works of Kassabov, Lubotzky and Nikolov, this establishes that the family of all non-abelian finite simple groups can be made into expanders in a uniform fashion.
\end{abstract}

\maketitle

\section{Introduction}

Let $X$ be a graph and let $\eps > 0$ be a real number. We say that $X$ is an $\eps$-expander if, for all sets $A$ consisting of at most half the vertices of $X$, we have $|\partial A| \geq \eps |A|$. Here, $\partial A$ refers to the boundary of $A$, that is to say those vertices that are joined to a vertex in $A$ but do not themselves lie in $A$. There is a huge literature on \emph{expander graphs}, and we refer the reader to \cite{hoory-linial-wigderson} for a comprehensive discussion. Much attention has been devoted to the particular case of \emph{Cayley graphs} $\Cay(G,S)$. Given a finite group $G$ and a symmetric generating set $S$, one defines $\Cay(G,S)$ to be the graph with vertex set $G$ in which $x$ and $y$ are joined to an edge if and only if $x = ys$ for some $s \in S$. Note that if $|S| = k$ then the Cayley graph is $k$-regular.

The following remarkable result was announced in \cite{pnas}, based on several earlier works of subsets of the authors of that paper.

\begin{theorem}[Every* non-abelian finite simple group is an expander]\label{kln-theorem}
There exist $k \in \N$ and $\eps > 0$ such that, for every* non-abelian finite simple group $G$, one may select a symmetric set $S$ of $k$ generators for which $\Cay(G,S)$ is an $\eps$-expander.
\end{theorem}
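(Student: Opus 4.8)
The plan is to deduce Theorem~\ref{kln-theorem} from the classification of finite simple groups (CFSG), handling each of the finitely many infinite families by a dedicated input and then reconciling the constants at the very end. Recall that a non-abelian finite simple group is either an alternating group $A_n$ with $n\geq 5$, a finite simple group of Lie type, or one of the $26$ sporadic groups. The sporadic groups form a finite list, and any finite collection of finite connected graphs of bounded degree is automatically a family of $\eps$-expanders for a suitable $\eps>0$, so this case costs nothing beyond bookkeeping; one simply fixes, for each sporadic $G$, an arbitrary symmetric generating set.

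Next I would treat the alternating groups by invoking Kassabov's theorem: there are a constant $k$ and an $\eps>0$ and symmetric generating sets $S_n$ of size $k$ for which the Cayley graphs $\Cay(A_n,S_n)$ (indeed $\Cay(S_n,S_n)$) are $\eps$-expanders. For the finite simple groups of Lie type I would then split according to rank. In unbounded rank -- the classical families $\operatorname{PSL}_n$, $\operatorname{PSp}_{2n}$, $\operatorname{PSU}_n$ and the simple orthogonal groups -- I would use the work of Kassabov and of Lubotzky--Nikolov, whose mechanism is to realise inside each such group many commuting copies of a pair with relative Kazhdan property, such as $(\SL_2\ltimes\Z^2,\Z^2)$, and to combine this with bounded generation; this yields explicit expanding generators with $k$ and $\eps$ independent of the rank and of the field.

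In bounded rank -- the rank-one groups $\operatorname{PSL}_2(q)$ and $\operatorname{PSU}_3(q)$ and the families $\operatorname{PSL}_d(q), \operatorname{PSp}_{2d}(q), G_2(q),\dots,E_8(q)$ of each fixed type, \emph{excluding the Suzuki groups} -- one may use the original argument of Kassabov--Lubotzky--Nikolov, or, most naturally from a current viewpoint, run the Bourgain--Gamburd machine: choosing generators as the reductions of a fixed Zariski-dense subgroup of the ambient algebraic group over $\Z$ (so that strong approximation forces the reductions to generate), one feeds the quasirandomness of these groups together with the growth/product theorems for Lie-type groups of bounded rank (Helfgott; Pyber--Szab\'o; and the present authors) into the flattening argument to produce a spectral gap. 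This last ingredient is by far the analytically deepest. Finally one reconciles the finitely many pairs $(k,\eps)$ that arise by taking $\eps$ to be the minimum and $k$ the maximum, after the routine device of padding each symmetric generating set up to the common size $k$, which cannot destroy expansion since enlarging $S$ only adds edges.

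The main obstacle -- and the one point at which none of the above strategies applies -- is precisely the family of Suzuki groups $\Sz(q)={}^2B_2(q)$ with $q=2^{2n+1}$; this is exactly why the statement carries an asterisk. These are bounded-rank groups of Lie type defined over a field of characteristic $2$, but, unlike every other bounded-rank family, they contain no subgroup isomorphic to $\SL_2$ or $\operatorname{PSL}_2$ over a large subfield, so neither the relative-property-$(T)$/bounded-generation route nor a direct appeal to an existing product theorem is available. Supplying the missing input -- a growth estimate, and a Zariski-dense source of generators adapted to $\Sz(q)$, so that the Bourgain--Gamburd machinery can still be driven -- is the content of the remainder of this paper, and with it the asterisk in Theorem~\ref{kln-theorem} is removed, giving the uniform statement over all non-abelian finite simple groups announced in the abstract.
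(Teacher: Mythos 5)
Theorem \ref{kln-theorem} is not proved in this paper at all: it is quoted from Kassabov--Lubotzky--Nikolov \cite{pnas}, and the paper's own contribution is only the removal of the asterisk, i.e.\ the Suzuki case (Theorem \ref{mainthm}). Your outline is a correct summary of how \cite{pnas} and the works it builds on establish the quoted statement --- sporadics trivially, Kassabov's expanders for the alternating groups, relative property $(T)$ plus bounded generation in unbounded rank, and $\SL_2$-based inputs in bounded rank, with $\Sz(q)$ excluded precisely because it contains no copy of $\SL_2(q')$ --- so it matches the intended (cited) proof; the one small caveat is that the uniform bounded-rank case as used in \cite{pnas} rests on Lubotzky's number-theoretic construction \cite{lubotzky} rather than on the strong-approximation/Bourgain--Gamburd route, which you rightly present only as an alternative viewpoint.
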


The asterisk means that this is true with the possible exception of a single family of simple groups, the \emph{Suzuki groups} $\Sz(q)$. We will recall the definition of these in Section \ref{suzuki-sec}.  We remark that the main reason why the techniques in \cite{pnas} do not extend to the Suzuki case is that those methods rely on $G$ containing an embedded copy of $\SL_2(q')$ for some large $q'$; however this is not so for Suzuki groups since $|\SL_2(q')|$ is always divisible by $3$, whereas $|\Sz(q)|$ is not.

Our main aim in this paper is to remove the lacuna in Theorem \ref{kln-theorem}.

\begin{theorem}[Suzuki groups are expanders]\label{mainthm}
There exists $\eps > 0$ such that, for every Suzuki group $G = \Sz(q)$, one may select a generating pair $a,b$ in $G$ such that, setting $S := \{a^{\pm 1}, b^{\pm 1}\}$, the Cayley graph $\Cay(G,S)$ is an $\eps$-expander. Thus Theorem \ref{kln-theorem} is true without any exception.
\end{theorem}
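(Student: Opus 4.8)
The plan is to run the Bourgain--Gamburd machine for the family $\Sz(q)$, $q = 2^{2n+1}$. It suffices to prove the spectral form of the conclusion: to exhibit a generating pair $a, b \in \Sz(q)$ and a constant $\eps' > 0$, independent of $q$, for which the random-walk operator on $\Cay(\Sz(q), \{a^{\pm 1}, b^{\pm 1}\})$ has second-largest eigenvalue at most $1 - \eps'$; the stated edge-expansion then follows from the discrete Cheeger inequality. The Bourgain--Gamburd method manufactures such a spectral gap from three inputs, each required uniformly in $q$:
\begin{enumerate}
\item \emph{Quasirandomness}: every nontrivial irreducible complex representation of $\Sz(q)$ has dimension at least $|\Sz(q)|^{c_0}$ for some absolute $c_0 > 0$.
\item \emph{A product theorem}: there is $\delta > 0$ such that every generating subset $A \subseteq \Sz(q)$ with $|A| \leq |\Sz(q)|^{1 - \delta}$ satisfies $|A \cdot A \cdot A| \geq |A|^{1 + \delta}$.
\item \emph{Non-concentration}: for a suitable absolute constant $C$ and $\ell := \lceil C \log |\Sz(q)| \rceil$, a random word of length $\ell$ in the symbols $a^{\pm 1}, b^{\pm 1}$ lies in any fixed proper subgroup $H < \Sz(q)$ with probability $O(|\Sz(q)|^{-\kappa})$ for some absolute $\kappa > 0$.
\end{enumerate}
Inputs (i) and (ii) are available essentially off the shelf. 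For (i), Suzuki's character table for $\Sz(q)$ shows that the smallest nontrivial character degree is of order $q^{3/2}$, a fixed positive power of $|\Sz(q)| \asymp q^{5}$. For (ii), the Suzuki groups have twisted Lie rank one, so the product theorem for finite simple groups of Lie type of bounded rank --- proved by the present authors and, independently, by Pyber and Szab\'o --- applies.

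The substance of the argument lies in (iii), which divides into the choice of $a, b$ and the estimation of escape from maximal subgroups. One works through the standard $4$-dimensional symplectic representation, realizing $\Sz(q) \subseteq \Sp_4(\F_q)$, and takes $a, b$ to be matrices defined uniformly in $q$ via the Suzuki parametrization --- the entries being built from $\F_q$ through the field automorphism $x \mapsto x^{2^{n+1}}$, whose square is the Frobenius $x \mapsto x^2$ --- and chosen ``generically'' (for instance so that the associated torus parameters are primitive in $\F_q^\times$). That $\langle a, b \rangle = \Sz(q)$ for every $q$ is then verified using Suzuki's classification of the subgroups together with a generation statement (Lemma~\ref{jones-result}). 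For the escape estimate one recalls that the maximal subgroups of $\Sz(q)$ are: the Borel subgroup of order $q^2(q - 1)$ (a point stabilizer in the $2$-transitive action on the Suzuki ovoid), a dihedral group of order $2(q - 1)$, two families of Frobenius groups of orders $4(q \pm \sqrt{2q} + 1)$, and the subfield subgroups $\Sz(q_0)$ with $q = q_0^{p}$, $p$ an odd prime. Each subgroup in the first four families is contained in the $\F_q$-points of a fixed proper algebraic subgroup of $\Sp_4$ of bounded degree --- a parabolic, or the normalizer of a maximal split or anisotropic torus --- so once one knows that the chosen pair enjoys the relevant Zariski-density/genericity property (Lemma~\ref{zariski-dense}), the ``escape from subvarieties'' argument of Bourgain--Gamburd delivers the non-concentration bound with a uniform $\kappa$; this is Lemma~\ref{girth}. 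The subfield subgroups $\Sz(q_0)$ lie in no proper algebraic subgroup, and are instead dispatched by first proving a weak non-concentration estimate and then amplifying it with the product theorem (ii), using also that there are only $O(\log q)$ of them up to conjugacy.

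The principal obstacle is (iii), and specifically the fact that $\Sz(q)$ is \emph{not} the group of $\F_q$-points of a linear algebraic group --- it is the fixed-point subgroup of an exceptional isogeny of $\Sp_4$ in characteristic $2$ --- so the customary Zariski-density and strong-approximation inputs used to obtain girth bounds are not directly available, and everything must be arranged through $\Sp_4$ and its subgroup lattice, uniformly over the varying finite fields $\F_q$. Pinning down an explicit pair $a, b$ valid for all $q$ that has the genericity of Lemma~\ref{zariski-dense}, and converting that into the effective, $q$-uniform non-concentration estimate of Lemma~\ref{girth}, is the crux of the proof. Once (i)--(iii) are in hand, the standard Bourgain--Gamburd $\ell^2$-flattening argument yields the spectral gap, hence the uniform $\eps$ of Theorem~\ref{mainthm}; combined with Theorem~\ref{kln-theorem}, this shows that \emph{all} non-abelian finite simple groups form a family of expanders, with no exception.
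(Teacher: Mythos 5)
Your overall framework --- Bourgain--Gamburd with quasirandomness, a product theorem, and non-concentration --- is the same as the paper's, and your inputs (i) and (ii) are indeed available off the shelf (Landazuri--Seitz, and Pyber--Szab\'o or \cite{bgt}). The genuine gaps are all in ingredient (iii). You propose an \emph{explicit} pair $a,b$ ``defined uniformly in $q$'' and chosen ``generically'', and you yourself call pinning down such a pair the crux --- but you never construct it, nor prove that it generates $\Sz(q)$, has large girth, or escapes from the maximal subgroups. The paper deliberately avoids this: its argument is probabilistic, showing that a \emph{random} pair $(a,b)$ satisfies the non-concentration estimate with probability $1-o_{q\to\infty}(1)$, and it explicitly remarks that no explicit generating set is produced. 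For a random pair, all of the needed estimates come from a Schwartz--Zippel-type counting lemma adapted to the twisted parametrization $x\mapsto x^{\theta}$ (Lemma \ref{simple-twist-bound}), with Jones' theorem (Lemma \ref{jones-result}) guaranteeing that the relevant polynomial equations are not identically zero; none of that machinery applies to a fixed explicit pair, so the step you identify as the crux is genuinely missing rather than merely deferred.

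You also misattribute the roles of the paper's lemmas in ways that conceal further gaps. Lemma \ref{jones-result} is a statement about word laws (no short word vanishes identically on $\Sz(q)\setminus B$), not a generation statement, so it cannot verify $\langle a,b\rangle=\Sz(q)$. Lemma \ref{zariski-dense} says that the whole group $\Sz(q)$ is not contained in any low-complexity subvariety of $\Sp_4(\overline{\F_2})$; it is used only to make the product theorem of \cite{bgt} applicable, and it is not a genericity property of a chosen pair. Lemma \ref{girth} is a girth statement for random pairs, not the non-concentration bound for the solvable maximal subgroups; in the paper that bound is obtained by combining the girth result with Kesten's bound on return probabilities and a free-group counting lemma for iterated commutators (Lemma \ref{solv-free}), exploiting that every non-subfield maximal subgroup is $3$-step solvable --- no ``escape from subvarieties'' argument is used or needed. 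Finally, for the subfield subgroups your plan (``prove a weak non-concentration estimate and amplify with the product theorem'') leaves the weak estimate itself unproved; the paper instead handles all conjugates $x^{-1}\Sz(q_0)x$ simultaneously by observing that the characteristic-polynomial coefficients $c_1,c_2,c_3$ of $w(a,b)$ are conjugation-invariant, and then bounding, via the twisted Schwartz--Zippel lemma, the probability that they all lie in a proper subfield, the degenerate case $c_1=c_2=c_3=0$ (equivalently $w(a,b)^4=\id$) being excluded by Jones' no-short-law result.
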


\emph{Remarks.} Our method is probabilistic and thus does not provide an explicit generating set. In fact, we shall prove that a random pair of elements $a,b$ will generate $G$ and have the above expansion property with probability going to $1$ as $q \rightarrow \infty$.

It is remarked in \cite{pnas} that careful computation ought to yield $k = 1000$ and $\eps = 10^{-10}$ as acceptable values in Theorem \ref{kln-theorem}. As it stands our method, though it gives the rather superior value $k = 4$, does not give any explicit value of $\eps$. However one could in principle replace all of the quantitative algebraic geometry arguments in Appendix A of \cite{bgt} by effective arguments, rather than using ultrafilters as we did there. Furthermore, this would only need to be done in the case $G = \Sp_4$ of the main theorem in \cite{bgt}.

Finally, let us note that the tendency to define group expansion in terms of graphs is a matter of custom, designed to draw attention to the much wider world of graph expanders. However, for our purposes a completely equivalent definition involving only the group is as follows: $G$ is an expander with generating set $S$ if and only if  $|A \triangle AS| \geq \eps |A|$ for all sets $A \subseteq G$ with $|A| \leq |G|/2$. Perhaps even more naturally, $G$ is an expander with generating set $S$ if and only if $|A S'| \geq (1 + \eps)|A|$ for all sets $A$ with $|A| \leq |G|/2$, where $S' := S \cup \{\id_G\}$.\vspace{11pt}

\textsc{Notation.}  We use the asymptotic notation $X = O(Y)$, $X \ll Y$, or $Y \gg X$ to denote the estimate $|X| \leq C Y$ for some absolute constant $C$.  If we need $C$ to depend on additional parameters then we will indicate this by subscripts; thus for instance $X \ll_l Y$ denotes the estimate $|X| \leq C_l Y$ for some $C_l$ depending only on $l$.  We use $o_{q \to \infty}(1)$ to denote a quantity that is bounded in magnitude by $c(q)$ for some quantity $c(q)$ depending on $q$ that goes to zero as $q \to \infty$.  If $A$ is a non-empty finite set, we use $\E_{x \in A}$ as shorthand for $\frac{1}{|A|} \sum_{x \in A}$, where $|A|$ denotes the cardinality of $A$.\vspace{11pt}

\textsc{Acknowledgments.} We would like to thank A.~Lubotzky, C.~Meiri and A.~Zuk for interesting discussions
regarding this problem, and L. Pyber for helpful comments.

EB is supported in part by the ERC starting grant 208091-GADA.
BG was, while this work was being carried out, a fellow at the Radcliffe Institute at Harvard. He is very happy to thank the Institute for proving excellent working conditions.
TT is supported by a grant from the MacArthur Foundation, by NSF grant DMS-0649473, and by the NSF Waterman award.

\section{An overview of the argument}

The basic scheme of the argument is the same as that used by Bourgain and Gamburd \cite{bg-sl2} to show that Zariski-dense (or random) pairs of generators give expanders in $\SL_2(\F_p)$. The argument requires three main ingredients.  The first is a ``Helfgott-type result'', that is to say a statement about the structure of ``approximate subgroups'' of $G$, of a type first obtained in \cite{helfgott}. The notions of approximate subgroup and of ``control'' (which features below) were also used in the context of Bourgain and Gamburd's methods in \cite{bgt}, \cite{bgt-expansion}.

\begin{theorem}\label{helf}
Suppose that $A \subseteq \Sz(q)$ is a $K$-approximate group that generates $\Sz(q)$. Then $A$ is $K^{O(1)}$-controlled by either the trivial group $\{\id\}$ or the whole group $\Sz(q)$.
\end{theorem}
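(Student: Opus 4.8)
The plan is to derive this from the product theorem for the simple algebraic group $\Sp_4$ of bounded rank, transported to $\Sz(q)$ via the standard realisation of Suzuki groups inside $\Sp_4$. Write $\mathbf{G} := \Sp_4$, regarded as an algebraic group over $\overline{\F}_2$. There is a Steinberg endomorphism $\sigma$ of $\mathbf{G}$, with $\sigma^2$ equal to the $q$-power Frobenius, such that $\Sz(q) = \mathbf{G}^\sigma \subseteq \mathbf{G}(\overline{\F}_2)$. Two features of this embedding will be used. First, $\mathbf{G} = \Sp_4$ over $\overline{\F}_2$ is simple as an algebraic group: in characteristic $2$ its centre is trivial, so it has no proper non-trivial closed normal subgroups. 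Second, $\Sz(q)$ is Zariski-dense in $\mathbf{G}$; this is a general property of fixed-point subgroups of Steinberg endomorphisms of connected groups, and may also be checked directly by observing that the Zariski closure of $\Sz(q)$ is a $\sigma$-stable closed subgroup meeting every root subgroup of $\mathbf{G}$, hence all of $\mathbf{G}$.

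First I would forget the finite-group structure and regard $A$ simply as a finite $K$-approximate subgroup of $\mathbf{G}(\overline{\F}_2)$. Applying the structure theorem for approximate subgroups of simple algebraic groups of bounded complexity — and it is enough to have this for $\mathbf{G} = \Sp_4$, which is precisely the case of the main theorem of \cite{bgt} invoked in the introduction — one obtains a closed subgroup $H \leq \mathbf{G}$ of complexity $O(1)$, normalised by $\langle A\rangle = \Sz(q)$, such that $A$ is $K^{O(1)}$-controlled by $H(\overline{\F}_2) \cap \langle A\rangle = H(\overline{\F}_2)\cap \Sz(q)$.

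Now I would feed back the hypothesis $\langle A\rangle = \Sz(q)$. Since $\Sz(q)$ normalises $H$, it normalises the identity component $H^\circ$, so $\Sz(q)$ lies in the closed subgroup $N_{\mathbf{G}}(H^\circ)$. If $H^\circ$ were a proper non-trivial closed subgroup of $\mathbf{G}$ then, $\mathbf{G}$ being simple, $N_{\mathbf{G}}(H^\circ)$ would be a proper closed subgroup containing the Zariski-dense subgroup $\Sz(q)$, which is impossible. Hence either $H^\circ = \mathbf{G}$, in which case $H = \mathbf{G}$ and $A$ is $K^{O(1)}$-controlled by $\mathbf{G}(\overline{\F}_2)\cap \Sz(q) = \Sz(q)$; or $H^\circ = \{\id\}$, so that $H$ is a finite subgroup which, being a $0$-dimensional variety of degree $O(1)$, has order $O(1)$, and then $A$ is $K^{O(1)}$-controlled by a set of size $O(1)$, whence $|A| = K^{O(1)}$ and $A$ is $K^{O(1)}$-controlled by $\{\id\}$. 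In the first case one should also record that $\Sz(q)$, being a finite simple group whose smallest faithful permutation representation has degree $q^2+1 \to \infty$, has no proper subgroup of index $\leq K^{O(1)}$ once $q$ is large, so that control by $\Sz(q)$ genuinely is the alternative to "small" and cannot be replaced by control by a proper subgroup. This yields the claimed dichotomy.

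The deep input is of course the $\Sp_4$ case of the product theorem of \cite{bgt}, which rests on the quantitative algebraic geometry developed there (escape from subvarieties, dimension and degree bounds, expansion inside regular semisimple Levi subgroups); but we are entitled to cite it. Within the present argument the real work — and the step I expect to be the main obstacle — is the descent: matching the \cite{bgt} notion of control by a closed subgroup of $\mathbf{G}(\overline{\F}_2)$ with the desired control by $\{\id\}$ or $\Sz(q)$, keeping track of the field of definition, the twisting endomorphism $\sigma$, and the characteristic-$2$ subtleties of $\Sp_4$ (the special isogeny relating it to $\mathrm{SO}_5$, and the fact that its scheme-theoretic centre is infinitesimal), and checking cleanly that $\Sz(q)$ is Zariski-dense and that $\mathbf{G}$ is simple, so that the normaliser argument above applies.
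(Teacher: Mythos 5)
There is a genuine gap, and it sits exactly where you predicted the main obstacle would be. Your argument hinges on the claim that $\Sz(q)$ is Zariski-dense in $\Sp_4(\overline{\F_2})$, asserted as ``a general property of fixed-point subgroups of Steinberg endomorphisms''. This is false as stated: $\Sz(q)$ is a finite group, hence Zariski closed, and its Zariski closure is itself --- a proper, zero-dimensional closed subgroup of the ten-dimensional group $\Sp_4(\overline{\F_2})$. (Fixed-point subgroups of Steinberg endomorphisms are always finite, so they are never dense; the proposed ``direct check'' via root subgroups fails for the same reason.) Consequently your normaliser step collapses: when $H^\circ$ is proper and non-trivial you derive a contradiction from ``$N_{\Sp_4}(H^\circ)$ is a proper closed subgroup containing the Zariski-dense subgroup $\Sz(q)$'', but proper closed subgroups containing $\Sz(q)$ exist in abundance ($\Sz(q)$ itself is one), so qualitative density gives no contradiction at all.

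What is actually needed, and what the paper uses, is the quantitative substitute: $\Sz(q)$ is \emph{sufficiently} Zariski dense, i.e.\ it is not contained in any proper closed subvariety (in particular, subgroup) of $\Sp_4(\overline{\F_2})$ of complexity below $c q^{1/2}$. This is Lemma \ref{zariski-dense}, proved via Jones' polynomial argument \cite{jones} in Appendix \ref{jones-app}; it is a genuine input about the twisted parametrisation $x \mapsto x^\theta$, not a formal property of the embedding. With that lemma in hand, the relevant theorem of \cite{bgt} is the one about sufficiently Zariski-dense $K$-approximate subgroups of a bounded-complexity simple algebraic group (here $\Sp_4$ over $\overline{\F_2}$), which already yields the dichotomy ``controlled by $\{\id\}$ or by $\langle A\rangle = \Sz(q)$'' and makes the normaliser argument unnecessary; alternatively, your descent could be repaired by replacing ``dense'' with ``not contained in any proper closed subgroup of complexity $K^{O(1)}$'' and checking that $N_{\Sp_4}(H^\circ)$ has complexity bounded in terms of that of $H$ --- but that again requires Lemma \ref{zariski-dense}, which your proposal neither proves nor cites in its correct quantitative form. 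Note also that the intermediate statement you attribute to \cite{bgt} (control by $H(\overline{\F_2}) \cap \langle A\rangle$ for some closed $H$ of bounded complexity normalised by $\langle A\rangle$) is not the form in which the result is stated there, and that the paper's primary proof is shorter still: $\Sz(q)$ is a finite simple group of Lie type of bounded rank, so \cite[Theorem 4]{pyber-szabo} applies directly.
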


\begin{proof} This follows immediately from \cite[Theorem 4]{pyber-szabo}, since $\Sz(q)$ is a simple group of Lie type.  It may also be deduced from \cite{bgt}, using the the fact that $\Sz(q)$ is a ``sufficiently Zariski-dense'' subgroup of the Chevalley group $\Sp_4(\overline{\F_2})$ of $4 \times 4$ symplectic matrices over $\overline{\F_2}$, that is to say the smallest degree of any nontrivial polynomial on $\Sp_4(\overline{\F_2})$ that vanishes on $\Sz(q)$ tends to infinity with $q$. That this is so follows from the work of Jones \cite{jones}, reproduced as Lemma \ref{zariski-dense} in the present paper; it was used in a related context by Larsen \cite{larsen}.
\end{proof}

The next ingredient is an assertion that Suzuki groups are highly quasirandom in the sense of Gowers \cite{gowers}.

\begin{theorem}[Quasirandomness]\label{quasi} The smallest dimension of an irreducible representation of $\Sz(q)$ is bounded below by $cq^{3/2}$ for some absolute constant $c > 0$.
\end{theorem}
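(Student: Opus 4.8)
The plan is to bound $\dim\rho$ from below, for an arbitrary nontrivial irreducible representation $\rho$ of $G := \Sz(q)$, by restricting to a Sylow $2$-subgroup $Q$ and then amplifying using the torus that normalises $Q$. I will use the following classical structural facts (all due in essence to Suzuki's original analysis of these groups). One has $|Q| = q^{2}$; $Q$ is a \emph{special} $2$-group, meaning $Z := Z(Q) = [Q,Q] = \Phi(Q)$ is elementary abelian, and here $|Z| = q$ with $Q/Z$ elementary abelian of order $q$. The normaliser $N_{G}(Q) = Q \rtimes H$ with $H$ cyclic of order $q-1$, and $H$ acts on $Z \cong \F_{q}^{+}$ in such a way that the $q-1$ nontrivial elements (equivalently, the $q-1$ nontrivial characters) of $Z$ form a single $H$-orbit. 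From the structure of a special $2$-group one sees that $Q$ has exactly $q$ linear characters, and that every nonlinear irreducible character $\theta$ of $Q$ has a nontrivial central character $\mu_{\theta} \in \mathrm{Irr}(Z)$ and degree $\asymp \sqrt{q}$ (in fact precisely $\sqrt{q/2}$).

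Given this, let $\rho$ be a nontrivial irreducible representation of $G$ of dimension $d$. Since $G$ is simple, $\ker\rho$ is trivial, so $Z$ does not lie in the kernel of $\rho|_{Q}$; hence some irreducible constituent $\theta$ of $\rho|_{Q}$ has nontrivial central character, and such a $\theta$ is nonlinear, so $\dim\theta \asymp \sqrt{q}$. Because $\rho$ is defined on all of $G$, and $H \leq N_{G}(Q)$, conjugation by any $h \in H$ carries $\rho|_{Q}$ to an isomorphic representation of $Q$ (via the intertwiner $\rho(h)$); consequently the multiset of irreducible constituents of $\rho|_{Q}$ is invariant under the $H$-action on $\mathrm{Irr}(Q)$. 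Now the $H$-orbit of $\theta$ maps onto the $H$-orbit of its central character $\mu_{\theta}$, which has length $q-1$; since $|H| = q-1$ this forces the orbit of $\theta$ to have length exactly $q-1$, consisting of $q-1$ pairwise non-isomorphic irreducibles each of degree $\dim\theta$. All of them therefore occur in $\rho|_{Q}$, and we conclude
\[
d \;=\; \dim\rho \;\geq\; (q-1)\dim\theta \;\gg\; q^{3/2},
\]
as required.

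The substantive content is entirely in the structural input: identifying $Q$ as the relevant Suzuki $2$-group, checking that its nonlinear irreducibles have degree $\asymp\sqrt q$, and checking that $H$ permutes the nontrivial characters of $Z$ transitively. These are standard and may be extracted from Suzuki's original paper; alternatively one can sidestep the argument and simply quote Suzuki's computation of the full character table of $\Sz(q)$, which shows that the smallest nontrivial character degree is in fact exactly $(q-1)\sqrt{q/2}$, or invoke the general Landazuri--Seitz lower bounds for representations of finite groups of Lie type. I find the argument above instructive because it isolates the source of the exponent $3/2$: a factor $\sqrt q$ from the faithful (nonlinear) representations of the special Sylow $2$-subgroup, times a factor $q$ from the torus acting with a single orbit on the nontrivial central characters. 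I expect the main obstacle, if one insists on a self-contained treatment, to be assembling these facts about Suzuki $2$-groups cleanly rather than any conceptual difficulty.
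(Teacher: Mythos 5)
Your argument is correct, but it is not the route the paper takes: the paper's entire proof of Theorem \ref{quasi} is a citation to the Landazuri--Seitz paper \cite{landazuri-seitz}, together with the remark that the exponent $3/2$ is inessential (any bound polynomial in $q$ would do for the Bourgain--Gamburd machine). What you have written is, in effect, a self-contained instance of the Landazuri--Seitz mechanism specialised to $\Sz(q)$: restrict a faithful irreducible to the Sylow $2$-subgroup $Q$, observe that a constituent nontrivial on $Z=Z(Q)=[Q,Q]$ must be nonlinear and hence of degree $\sqrt{q/2}$, and then amplify by the cyclic torus $H$ of order $q-1$ in $N_G(Q)$, whose orbit on the nontrivial central characters is full, forcing at least $q-1$ pairwise inequivalent such constituents. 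The structural inputs you invoke are all correct and checkable from the explicit matrices in Definition \ref{suz-def}: for instance, $D(\gamma)^{-1}U(0,\beta)D(\gamma)=U(0,\gamma^{\theta+2}\beta)$ and $\gcd(\theta+2,q-1)=\gcd(2^{n}+1,2^{2n+1}-1)=1$, which gives the simple transitivity of $H$ on $Z\setminus\{1\}$ and hence on the nontrivial characters of $Z$; and the radical of the commutator form on $Q/Z$ attached to a nontrivial central character has order $2$, which gives the degree $\sqrt{q/2}$ for the nonlinear irreducibles of $Q$. The trade-off is the expected one: the paper's citation is shorter and sufficient for its purposes, while your argument is self-contained, isolates where the exponent $3/2$ comes from (a $\sqrt{q}$ from the special $2$-group times a $q$ from the torus orbit), and even identifies the true minimal degree $(q-1)\sqrt{q/2}$, matching Suzuki's character table; its only cost is that the facts about the Suzuki $2$-group and the Frobenius action of $H$ must be either verified as above or quoted from Suzuki's original analysis.
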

\begin{proof} This follows from the paper of Landazuri and Seitz \cite{landazuri-seitz}.  The exponent $3/2$ is not
essential for our purposes; any lower bound which was polynomial in $q$ would suffice.  \end{proof}

The third ingredient is not in the previous literature and is therefore the beef of our paper: it asserts that random walks on the Suzuki group do not concentrate in a subgroup.

\begin{theorem}[Non-concentration estimate]\label{non-conc}  For some choice of $a,b \in \Sz(q)$ and for some constants $C$ and $\delta > 0$, we have
\begin{equation}\label{non-conc-eq} \sup_{H < G} \mu_{a,b}^{(n)}(H) < q^{-\delta}\end{equation}for all $n \geq C \log q$, where the supremum ranges over all proper subgroups $H$ of $G$, and where $\mu_{a,b}$ is the probability measure assigning weight $1/4$ to each of the four points $a,a^{-1}, b, b^{-1}$, and $\mu_{a,b}^{(n)}$ is the $n$-fold convolution.
\end{theorem}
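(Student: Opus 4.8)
The plan is to follow the non-concentration strategy of Bourgain and Gamburd, adapted to the arithmetic of $\Sz(q)$. I would take $a,b$ uniformly at random in $G=\Sz(q)$ and show that with probability $1-o_{q\to\infty}(1)$ the pair generates $G$ and satisfies the estimate \eqref{non-conc-eq}. Since every proper subgroup is contained in a maximal one and $\mu_{a,b}^{(n)}(H)\le\mu_{a,b}^{(n)}(M)$ for $H\le M$, it is enough to bound $\mu_{a,b}^{(n)}(M)$ over all maximal subgroups $M$, and for this I would invoke Suzuki's classification of the maximal subgroups of $\Sz(q)$: up to conjugacy they are the Borel subgroup $B$ (a point stabiliser of order $q^{2}(q-1)$ in the $2$-transitive action on $q^{2}+1$ points), the dihedral group $D_{2(q-1)}$, the two Frobenius groups of orders $4(q\pm r+1)$ with $r=\sqrt{2q}$, and the subfield subgroups $\Sz(q_{0})$ with $q=q_{0}^{p}$, $p$ an odd prime. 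This separates the problem into two genuinely different cases, according to whether $M$ is ``algebraically thin'' or merely ``Galois-thin''.

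In the first case -- $B$, $D_{2(q-1)}$, the two Frobenius subgroups, and all of their $G$-conjugates -- each such $M$ is contained in the $\F_q$-points of a proper algebraic subgroup $\mathbf{H}\le\Sp_4$ of \emph{bounded} complexity: a Borel of $\Sp_4$ in the first instance, a normaliser of a maximal torus in the others. Since $\Sz(q)$ sits ``sufficiently Zariski-densely'' in $\Sp_4(\overline{\F_2})$ in the sense of Jones (Lemma \ref{zariski-dense}), for $q$ large relative to that bounded complexity no $G$-conjugate of $\mathbf{H}(\overline{\F_2})$ contains $G$; combining this with the event $\langle a,b\rangle=G$, a quantitative ``escape from subvarieties'' argument of the kind developed in Appendix A of \cite{bgt} then shows that the random walk places mass $\le C'e^{-cn}$ on every translate $g\mathbf{H}(\F_q)$, with $c,C'$ depending only on the complexity, so that $\mu_{a,b}^{(n)}(M)<q^{-\delta}$ once $n\ge C\log q$. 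As the authors note, this is the one point where the ultrafilter arguments of \cite{bgt} would need to be made effective, and only for $\Sp_4$.

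The main obstacle is the second case: ruling out concentration in the subfield subgroups $\Sz(q_{0})$ and their conjugates. These are \emph{not} contained in any bounded-complexity subvariety of $\Sp_4$ -- they are the fixed points of a power $\phi$ of the defining Frobenius, hence ``Galois-thin'' rather than ``Zariski-thin'' -- so the previous argument fails, and the crude bound $\mu_{a,b}^{(n)}(M)\le|M|^{1/2}\|\mu_{a,b}^{(n)}\|_{2}\le q^{5/6}\rho^{n}$ coming from the logarithmic girth (Lemma \ref{girth}) is far too weak, since the girth is only $\asymp\log q$ with a small implied constant. What helps is that there are only $O(\log q)$ conjugacy classes of such $M$, that each $\Sz(q_{0})$ has order at most $q^{5/3}\ll|G|\asymp q^{5}$, and that a random pair generates $G$ and so is not conjugate into any $\Sz(q_{0})$. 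The plan is to encode the event $w_{n}(a,b)\in g\Sz(q_{0})g^{-1}$ as a Frobenius-twist relation -- essentially $w_{n}(a,b)=w_{n}(\phi_{g}(a),\phi_{g}(b))$, where $\phi_{g}$ is $\phi$ conjugated by $g$ -- and to bound the probability of such a relation by studying the walk on the group generated by $a,b$ together with their $\phi$-images, exploiting that the matrix entries of a long random word generate $\F_q$ over the prime field with high probability. Making this quantitative and uniform over the $q^{O(1)}$ conjugates of each $\Sz(q_{0})$ is the technical heart of the argument.

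Putting the two cases together yields $\sup_{H<G}\mu_{a,b}^{(n)}(H)<q^{-\delta}$ for all $n\ge C\log q$, with $C,\delta>0$ absolute, for a suitable (random) choice of $a,b$, which is the desired estimate.
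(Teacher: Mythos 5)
Your overall framing (random $a,b$, reduction to maximal subgroups, and the split into ``structural'' subgroups versus subfield subgroups $\Sz(q_0)$) matches the paper's, but both halves of your argument have genuine gaps where the paper's key ideas are missing. In the first case, you assert that because $B$, the dihedral group and the two Frobenius groups sit inside proper algebraic subgroups of $\Sp_4$ of bounded complexity, a quantitative ``escape from subvarieties'' argument in the style of Appendix A of \cite{bgt}, combined with Zariski density (Lemma \ref{zariski-dense}), yields $\mu_{a,b}^{(n)}(gH(\F_q)) \leq C'e^{-cn}$. That estimate does not follow from the escape machinery you cite: those arguments concern approximate groups and produce, in boundedly many steps, \emph{one} element off a subvariety; they do not give exponential decay of the random-walk measure on translates of a subvariety, which is essentially the very non-concentration statement being proved. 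The paper avoids this entirely by exploiting that all these subgroups are $3$-step solvable (Lemma \ref{subgroup-structure}): using the girth bound (Lemma \ref{girth}), words of length $\asymp \log q$ in $a,b$ behave freely, the triple commutator $\psi_3$ vanishes on $H^8$, and the free-group counting Lemma \ref{solv-free} bounds $|W_2(2n_0)\cap H|$ polynomially in $n_0$; combined with Kesten's bound \cite{kesten} on the number of walks returning to a point, this gives $\mu^{(2n_0)}_{a,b}(H) \ll n_0^{8}(\sqrt 3/2)^{2n_0} < q^{-\delta}$. If you want to keep your route you must actually prove the exponential non-concentration on bounded-complexity subvarieties, which is a substantial theorem in its own right, not a citation.

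In the subfield case you correctly identify the difficulty (uniformity over the $q^{O(1)}$ conjugates $g\Sz(q_0)g^{-1}$) but then declare it ``the technical heart'' without carrying it out, so as written this is not a proof. The paper's resolution is a specific trick you do not have: the coefficients $c_1,c_2,c_3$ of the characteristic polynomial are conjugation-invariant, so $w(a,b)\in x^{-1}\Sz(q_0)x$ forces $c_i(w(a,b))$ to lie in the proper subfield $\F_{q_0}$; since $2n+1$ is odd, the union of all proper subfields of $\F_q$ has size $O(q^{1/3})$, which kills the dependence on $x$ and on $q_0$ simultaneously. One then bounds, for each fixed word $w$ of length $O(\log q)$ and each nonzero subfield value, the probability over random $a,b$ (parametrised by ten coordinates and their $\theta$-twists) via the twisted Schwartz--Zippel Lemma \ref{simple-twist-bound} (giving Lemma \ref{word-trace-lem} and $\sigma_1 \ll q^{-1/6}\log q$), and handles the all-zero case by noting it forces $w(a,b)^4=\id$, which by Jones' bound on word laws (Lemma \ref{jones-result}) cannot hold identically on $\Sz(q)\setminus B$ for short $w$, so the same Schwartz--Zippel bound applies. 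Your proposed Frobenius-twist relation $w(\phi_g(a),\phi_g(b))=w(a,b)$ is a plausible starting point, but turning it into a bound uniform in $g$ is exactly the step you have not supplied, and it is unnecessary once one passes to conjugation-invariant quantities. Finally, note the paper also needs the small reduction $\mu^{(n+m)}_{a,b}(H)\leq (\mu^{(2n)}_{a,b}(H))^{1/2}$ to pass from $n\asymp \log q$ to all $n\geq C\log q$, which your write-up omits.
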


In fact we will prove that this statement holds for a proportion $1 - o_{q \rightarrow \infty}(1)$ of all pairs $(a,b)$ in $\Sz(q) \times \Sz(q)$.

The Bourgain-Gamburd argument from \cite{bg-sl2} yields Theorem \ref{mainthm} as a consequence of Theorems \ref{helf}, \ref{quasi} and \ref{non-conc}; see \cite{green-survey} or \cite{bgt-expansion} for further discussion. Thus, the only remaining task is to establish Theorem \ref{non-conc}.

There are several steps necessary to do this, but we take advantage of the fact that maximal proper subgroups of $\Sz(q)$ are either 3-step solvable or else conjugates of $\Sz(q_0)$ for some $q_0 = q^{1/r}$, $r \in \N$, $r \geq 3$ (note that $q = 2^{2n+1}$, so we could not have $r = 2$; this point will be helpful later on); see Lemma \ref{subgroup-structure}. The solvability is particularly helpful and allows us to avoid the theory of random matrix products, important in more recent applications of the Bourgain-Gamburd method, by following a simpler argument very similar to that used in \cite{bg-sl2}. Another key ingredient will be that almost all pairs $(a,b)$ in $\Sz(q)$ satisfy no relation of length up to $\kappa \log q$, for some $\kappa>0$, a fact first proved in \cite{ghssv} (see also Appendix \ref{girth-app} below).


\section{Basic facts about Suzuki groups}\label{suzuki-sec}

There are various conceptually enlightening ways to define the Suzuki groups: we refer the reader to \cite{carter, wilson} for some of them. For our purposes it is more convenient to proceed using a quite explicit presentation of these groups using $4 \times 4$ matrices over fields of characteristic two (although we will not, in this paper, really need to know the detailed form of this presentation). Such a presentation was given in Suzuki's original paper \cite{suzuki}. A closely-related parametrisation was used in a paper of \cite{jones}. Since we will require results from Jones' paper, we use his particular parametrisation below.

Here, and for the rest of the paper, we will set $q := 2^{2n + 1}$ and consider the finite field $\F_q$. Set $\theta := 2^{n+1}$. Then, for any $x \in \F_q$, we have $(x^{\theta})^{\theta} = x^2$; that is to say, the map $x \mapsto x^{\theta}$ acts as a ``square root'' of the Frobenius map $x \mapsto x^2$.  As $\F_q$ has characteristic $2$, the map $x \mapsto x^\theta$ is of course an automorphism.

\begin{definition}[Suzuki group]\label{suz-def} Suppose that $a, b, \alpha, \beta \in \F_q$ and $c, \gamma \in \F_q^{\times}$.
Define $4 \times 4$ matrices over $\F_q$ by
\[ u(a,b, \alpha, \beta) := \begin{pmatrix} 1 & 0 & 0 & 0 \\ \alpha & 1 & 0 & 0 \\ \alpha a + \beta & a & 1 & 0 \\ \alpha^{2}a + \alpha\beta + b & \beta & \alpha & 1\end{pmatrix},\]
\[ d(c, \gamma) := \begin{pmatrix} c \gamma & 0 & 0 & 0 \\ 0 & \gamma & 0 & 0 \\ 0 & 0 & \gamma^{-1} & 0 \\ 0 & 0 & 0 & \gamma^{-1} c^{-1}\end{pmatrix}, T := \begin{pmatrix} 0 & 0 & 0 & 1 \\ 0 & 0 & 1 & 0 \\ 0 & 1 & 0 & 0 \\ 1 & 0 & 0 & 0 \end{pmatrix}.\]
Then, setting $U(\alpha,\beta):=u(\alpha^{\theta}, \beta^{\theta}, \alpha, \beta)$ and $D(\gamma):=d(\gamma^{\theta},\gamma)$, we define
\begin{align*} \Sz(q) := \{ U(\alpha,\beta) D(\gamma) T U(\alpha', \beta') & : \alpha, \alpha',\beta, \beta' \in \F_q, \gamma \in \F_q^{\times}\} \\ & \cup \{ U(\alpha, \beta) D(\gamma) : \alpha, \beta \in \F_q, \gamma \in \F_q^{\times}\}.\end{align*}
\end{definition}

Throughout the paper we write $B$ for the subgroup consisting of the products $U(\alpha, \beta)D(\gamma)$. The reader may care to check that the parametrisations given in the definition above are unique. Thus $|B| = q^2 (q-1)$ and $|\Sz(q)| = q^2 (q^2 + 1) (q-1) \sim q^5$. Moreover $\Sz(q)$ is a subgroup of $\Sp_4(q):=\{A \in \SL_4(q), A^tTA=T\}$ and the matrices of the form $u(a,b,\alpha,\beta)d(c,\gamma)$ parametrise a Borel subgroup $B_0$ of $\Sp_4(q)$.

Let us now detail several lemmas concerning these groups. First, we need the following result concerning their subgroup structure.

\begin{lemma}[Subgroups of $\Sz(q)$]\label{subgroup-structure}
Every proper subgroup $H < \Sz(q)$ is either a conjugate of $\Sz(q_0)$ for some subfield $\F_{q_0} \subsetneq \F_q$ or else is a $3$-step solvable group.
\end{lemma}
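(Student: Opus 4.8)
The plan is to invoke the classification of maximal subgroups of $\Sz(q)$, which is essentially due to Suzuki \cite{suzuki} and is recorded in the references \cite{carter, wilson}. The key input is that the maximal proper subgroups of $G = \Sz(q)$, up to conjugacy, are: (i) the Borel subgroup $B$ of order $q^2(q-1)$, which is a Frobenius group with kernel the Sylow $2$-subgroup and complement a cyclic group of order $q-1$; (ii) the normalizer of a maximal torus $A_0$ of order $q-1$, which is dihedral of order $2(q-1)$; (iii) the normalizers of the two cyclic ``Zsygmondi'' tori $A_1, A_2$ of orders $q + \theta + 1$ and $q - \theta + 1$ (recall $\theta = 2^{n+1}$), each normalizer having order $4$ times the torus order; and (iv) the subfield subgroups $\Sz(q_0)$ where $\F_{q_0}$ is a maximal subfield of $\F_q$, i.e.\ $q = q_0^r$ with $r$ prime. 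Since every proper subgroup $H < G$ is contained in some maximal subgroup, it suffices to check the conclusion for each maximal $M$ on this list: if $H \leq M$ and $M$ is of type (i)--(iii) then $M$ is solvable of derived length at most $3$, hence so is $H$; if $M$ is of type (iv) then $H$ is contained in a conjugate of $\Sz(q_0)$ with $\F_{q_0} \subsetneq \F_q$, which is the first alternative.

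First I would verify the solvability claim for the types (i)--(iii). For the Borel $B = U \rtimes C$, the subgroup $U$ (the Sylow $2$-subgroup, of order $q^2$) is itself nilpotent of class $2$ — indeed, in the explicit matrix coordinates of Definition \ref{suz-def}, the commutator $[U(\alpha,\beta), U(\alpha',\beta')]$ lies in the center $\{U(0,\beta)\}$ of $U$, which is elementary abelian of order $q$. Hence the derived series of $B$ is $B \trianglerighteq U \trianglerighteq [U,U] \trianglerighteq 1$, giving derived length at most $3$. The groups of types (ii) and (iii) are metabelian (abelian-by-cyclic, in fact), so have derived length at most $2$. Thus in all cases a subgroup of $M$ is $3$-step solvable.

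The only genuine content beyond citing the classification is to make sure the list above is complete and that no maximal subgroup falls outside the two stated alternatives; this is the step I expect to be the main point to get right. One clean way to package it, if one prefers not to quote the maximal subgroup classification wholesale, is the Cheat via the permutation action: $\Sz(q)$ acts $2$-transitively on the $q^2 + 1$ points of the Suzuki--Tits ovoid, with point stabilizer $B$ (a Frobenius group) and two-point stabilizer the cyclic torus $A_0$ of order $q-1$. A proper subgroup $H$ either fixes a point — then $H \leq B$ is solvable of derived length $\leq 3$ as above — or acts transitively on the ovoid. In the transitive case, $q^2 + 1 \mid |H|$; since $q^2 + 1 = (q + \theta + 1)(q - \theta + 1)$ is coprime to $q$ and to $q - 1$, one can analyze the composition factors of $H$ and show (using that the only simple groups with a $2$-transitive action of this shape are the $\Sz(q_0)$) that $H$ is contained in a conjugate of some $\Sz(q_0)$, and that $\F_{q_0}$ is a proper subfield when $H \neq G$. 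I would present the argument by citing Suzuki's original determination of the subgroup structure \cite{suzuki} (see also \cite{wilson}), since this is standard and the details of the transitive case are somewhat involved; the solvability computation for $B$ via the matrix presentation is the one piece worth spelling out explicitly, as it is exactly the ``$3$-step'' that is used later.
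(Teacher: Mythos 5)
Your approach is the same as the paper's: the paper proves this lemma simply by citing the classification of subgroups of $\Sz(q)$ (Wilson's book, Theorem 4.1, going back to Suzuki), and your solvability verifications are correct — $B=U\rtimes C_{q-1}$ with $U$ of class $2$ gives derived length $\leq 3$, and the torus normalizers are metabelian (whether the ``$4$'' on top is cyclic or elementary abelian is immaterial here).

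There is, however, one small logical gap in your reduction to maximal subgroups, in the type (iv) case. If $H\leq M$ with $M$ a conjugate of $\Sz(q_0)$, you assert this ``is the first alternative''; but the first alternative is that $H$ \emph{equals} a conjugate of some subfield subgroup, not that it is contained in one. A proper subgroup of $\Sz(q_0)$ — say its Borel — is not conjugate to any $\Sz(q_1)$, and your argument as written does not cover it. The fix is a one-line induction on the field (equivalently on $|G|$): apply the dichotomy inside $\Sz(q_0)$, using that derived length is monotone under passage to subgroups and that a subfield of $\F_{q_0}$ is a subfield of $\F_q$; the recursion terminates since the field strictly shrinks (and the degenerate case $\Sz(2)$ is solvable). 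With that amendment your proof is complete and coincides with the paper's, the only difference being that you spell out the derived-length computations which the paper leaves to the citation.
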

\begin{proof}
This is a consequence of \cite[Theorem 4.1]{wilson}. The notation used there hails from the \emph{Atlas of finite groups} and is not necessarily standard. Note, for example, that in parts (iii) and (iv) of \cite[Theorem 4.1]{wilson} one sees the notation $C_n : 4$, which refers to a semidirect product of the cyclic group $\Z/n\Z$ by the elementary abelian group $\Z/2\Z \times \Z/2\Z$.
\end{proof}

The next lemma asserts that $\Sz(q)$ is not contained in any low-complexity subvariety of $\Sp_4(\overline{\F_2})$; it is needed in order to give the second proof of Theorem \ref{helf} using the results in \cite{bgt}.

\begin{lemma}[$\Sz(q)$ is sufficiently Zariski dense]\label{zariski-dense}
The Suzuki group $\Sz(q)$ is not contained in any proper algebraic subgroup of $\Sp_4(\overline{\F_2})$ of complexity $M_q$, where $M_q \rightarrow \infty$ as $q \rightarrow \infty$.
\end{lemma}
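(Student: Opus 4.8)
The plan is to establish the following reformulation: for each fixed $M$ there is a threshold $q_0(M)$, of polynomial size in $M$, such that no proper algebraic subgroup of $\Sp_4(\overline{\F_2})$ of complexity at most $M$ contains $\Sz(q)$ once $q \ge q_0(M)$; one may then take $M_q := \sup\{M : q \ge q_0(M)\}$. The inputs are Jones's explicit embedding $\Sz(q) \le \Sp_4(\overline{\F_2})$ recorded in Definition \ref{suz-def}, together with three standard properties of Suzuki groups valid for $q \ge 8$: $\Sz(q)$ is simple; it contains semisimple elements of order $q-1$ (odd and $>1$) and, its Sylow $2$-subgroups being Suzuki $2$-groups of exponent $4$, an element of order $4$; and every proper subgroup of $\Sz(q)$ has index at least $q^2+1$. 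The argument follows the pattern used by Larsen \cite{larsen} in a related situation.

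The crux is a reduction from ``bounded complexity'' to ``connected''. Suppose $\Sz(q) \le V$ with $V$ a proper closed subgroup of $\Sp_4(\overline{\F_2})$ of complexity $\le M$. Then $V^\circ$ is again a proper closed subgroup, and $|V/V^\circ| = O_M(1)$, since a subvariety of $\A^{16}$ cut out by equations of degree $\le M$ has only $O_M(1)$ connected components --- the sort of quantitative estimate catalogued in Appendix A of \cite{bgt}. As $\Sz(q) \cap V^\circ$ is a subgroup of $\Sz(q)$ of index at most $[V:V^\circ] = O_M(1)$, once $q_0(M)$ is large enough that $q^2+1$ exceeds this bound, simplicity of $\Sz(q)$ forces $\Sz(q) \le V^\circ$. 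It therefore suffices to show that, for $q \ge 8$, $\Sz(q)$ lies in no \emph{connected} proper closed subgroup $W$ of $\Sp_4(\overline{\F_2})$. By the Borel--Tits theorem, $W$ either has non-trivial unipotent radical, so that $W \le P$ for some proper parabolic $P$, or is reductive of dimension at most $6$ (the proper reductive subgroups of $\Sp_4$ are classical and in particular have dimension $\le 6$). In the parabolic case $\Sz(q) \cap R_u(P)$ is normal in the simple group $\Sz(q)$ and is not all of it, since $\Sz(q)$ has non-trivial semisimple elements; hence it is trivial, and $\Sz(q)$ embeds into the Levi quotient $P/R_u(P)$, a connected reductive group of dimension $\le 6$. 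Either way $\Sz(q)$ embeds into a connected reductive group $R$ over $\overline{\F_2}$ with $\dim R \le 6$. Write $R = Z(R)^\circ \cdot [R,R]$: the semisimple group $[R,R]$ has dimension $\le 6$, hence at most two simple factors, each of type $A_1$, so it is isogenous to $\SL_2$ or to $\SL_2 \times \SL_2$; and since $R/[R,R]$ is a torus and $\Sz(q)$ has no non-trivial abelian quotient, $\Sz(q) \le [R,R]$. The central isogenies here have kernels of $2$-power order --- infinitesimal in characteristic $2$ --- hence are bijective on $\overline{\F_2}$-points, so projecting onto a non-trivial simple factor gives an embedding $\Sz(q) \hookrightarrow \SL_2(\overline{\F_2})$. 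This is impossible: in characteristic $2$ a $2\times2$ unipotent matrix squares to the identity, so every element of $\SL_2(\overline{\F_2})$ has order $2^a m$ with $m$ odd and $a \le 1$, whereas $\Sz(q)$ has an element of order $4$. Chasing the constants through gives $q_0(M) = M^{O(1)}$.

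The one step that is not merely formal is the passage to $V^\circ$: it genuinely exploits that a complexity bound on $V$ controls $|V/V^\circ|$ uniformly in $q$, so that the unboundedly large minimal permutation degree $q^2+1$ of $\Sz(q)$ can be used. This is precisely the quantitative algebraic geometry that \cite[Appendix A]{bgt} sets up (there via an ultrafilter), and it is why the statement must refer to complexity rather than dimension: $\Sz(q)$ does lie in proper algebraic subgroups --- inside itself, and inside $\Sz(q^3), \Sz(q^9), \dots$ --- but the complexity of each of these grows with $q$. A more hands-on alternative, closer to \cite{jones}, would be to extract from the matrix parametrisation a bounded family of regular functions on $\Sp_4$ that separates enough elements of $\Sz(q)$ to force any subvariety containing $\Sz(q)$ to have large degree; the structural argument above seems cleaner, and it is the one I would carry out.
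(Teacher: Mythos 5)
Your argument is correct as a proof of the statement as literally written, but it takes a genuinely different route from the paper's. You prove the \emph{subgroup} version structurally: bound $[V:V^\circ]$ by $O_M(1)$ via the component count for bounded-complexity varieties, use simplicity (or the minimal permutation degree $q^2+1$) to force $\Sz(q)\le V^\circ$, then rule out connected proper subgroups by Borel--Tits together with the classification of small connected reductive subgroups of $\Sp_4$, finishing with the order-$4$ elements of $\Sz(q)$ against the fact that elements of $\SL_2(\overline{\F_2})$ have $2$-part at most $2$. The facts you assert without proof (component bounds for bounded-complexity varieties, Borel--Tits, that proper connected reductive subgroups of $\Sp_4$ have dimension at most $6$ with semisimple part of type $A_1$ or $A_1\times A_1$, exponent-$4$ Sylow $2$-subgroups, minimal index $q^2+1$) are standard and correctly invoked. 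The paper instead proves Lemma \ref{jones-lemma}, following Jones \cite{jones}: any polynomial of degree at most $c\sqrt{q}$ vanishing on $\Sz(q)\setminus B$ vanishes identically on $\Sp_4(\overline{\F_2})$, via the big Bruhat cell parametrisation, a Schwartz--Zippel count in the twisted parameters $(\alpha,\alpha^\theta)$, and the observation that $2M<\theta$ keeps distinct monomials distinct after the substitution $a\mapsto\alpha^\theta$. That explicit route buys two things yours does not: (i) the conclusion for arbitrary proper closed \emph{subvarieties}, not just subgroups --- which, per the Remark following the lemma and the second proof of Theorem \ref{helf}, is what the application to \cite{bgt} actually requires, and for which your reduction to $V^\circ$ and Borel--Tits has no analogue (a bounded-degree hypersurface containing $\Sz(q)$ need not be a group); and (ii) the explicit rate $M_q\sim q^{1/2}$, plus Lemma \ref{jones-result} on word maps as a byproduct, whereas your component-count threshold $q_0(M)=M^{O(1)}$ yields only some weaker, though still polynomial, $M_q$. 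What your route buys in exchange is softness: no computation with the matrix parametrisation at all, only structure theory of algebraic groups and elementary properties of $\Sz(q)$. So your proposal stands as a proof of Lemma \ref{zariski-dense} as stated, but it could not replace the paper's appendix, which is really establishing the stronger polynomial/subvariety form of Zariski density.
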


\begin{proof}
See Appendix \ref{jones-app}.  In fact we can take $M_q \sim q^{1/2}$, although the exact rate is not of importance for our argument.
\end{proof}

\emph{Remark.} For a full discussion of the notion of complexity of an algebraic variety, see our earlier paper \cite{bgt}. In the context of this lemma, it simply means that the smallest degree of a polynomial which vanishes on $\Sz(q)$ but not on $\Sp_4(\overline{\F_2})$ tends to infinity as $q$ does.

We will also require some less standard facts about the Suzuki groups. The first is a result of G. Jones, giving a lower bound on the shortest word that vanishes identically on $\Sz(q)$.

\begin{lemma}[No short word laws in the Suzuki group]\label{jones-result}
Suppose that $w$ is some word in the free group $F_2$, such that $w(a,b) = \id$ for all $a,b \in \Sz(q) \setminus B$. Then $w$ has length at least $c\sqrt{q}$ for some absolute constant $c > 0$.
\end{lemma}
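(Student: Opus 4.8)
The plan is to exhibit a single pair of elements $a, b \in \Sz(q) \setminus B$ for which $w(a,b) \neq \id$ whenever $w$ has length less than $c\sqrt{q}$; this suffices, since the hypothesis demands that $w$ vanish on \emph{all} such pairs. The natural choice is to take $a$ and $b$ to be elements generating a large cyclic torus, or more precisely to exploit the root subgroups. Recall from Definition \ref{suz-def} that $\Sz(q)$ contains the involutions coming from the matrices $U(\alpha, \beta)$ with $\beta = 0$ — these form an abelian subgroup isomorphic to $(\F_q, +)$ — and the torus elements $D(\gamma)$. The key point is a "ping-pong in a single word" computation: if we pick $a = T$ (an involution reversing the flag) and $b = U(\alpha_0, 0)$ for a suitable $\alpha_0 \in \F_q^\times$, then a reduced word $w$ of length $\ell$ in $a, b$ evaluated on this pair, when expanded using the explicit $4\times 4$ matrix formulas, produces matrix entries that are polynomials in $\alpha_0$ (and its images under powers of $\theta$) of degree $O(\ell)$ in $\alpha_0$. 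If such a word were the identity, these polynomials would have to vanish identically in $\alpha_0 \in \F_q^\times$, forcing their degree to exceed $q - 1$; hence $\ell \gg q$, which is even stronger than claimed. To get the $\sqrt q$ threshold rather than a $q$ threshold one must be more careful — the automorphism $x \mapsto x^\theta$ squares the degree when iterated, so the honest bookkeeping of "weighted degree" under alternating applications of $\theta$ and multiplication gives a degree bound like $2^{O(\ell)}$ but a \emph{Frobenius-twisted} degree that only needs to beat $q^{1/2} = 2^{n + 1/2}$.

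Concretely, I would proceed as follows. First, set up coordinates: write a general element of $\Sz(q) \setminus B$ in the Bruhat form $U(\alpha,\beta) D(\gamma) T U(\alpha',\beta')$ and record how multiplication by the generators $a,b$ acts on the parameters $(\alpha, \beta, \gamma, \alpha', \beta')$, tracking in particular the bottom-left entry of the matrix as the "size" of the element. Second, choose $a, b$ so that the associated transformations are "expanding" in the sense that each letter strictly increases a suitable height (a max over the $\F_q$-degrees of the coordinate functions, where the $\theta$-twisted coordinates are weighted by $\theta = 2^{n+1}$ appropriately). Third, run the height estimate: by induction on word length, $w(a,b)$ has height growing at least geometrically (or at least additively, which already suffices), so after $\ell \ll \sqrt q$ steps the relevant matrix entry is a nonzero polynomial in the underlying field parameter of degree strictly less than $q$, hence is not identically zero — contradiction with $w(a,b) = \id$. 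Since we only need \emph{some} pair, we have freedom to optimize the choice of $a$ and $b$ to make this height growth transparent.

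The main obstacle is the careful tracking of degrees through the semidirect-product structure and, especially, through the twisted Frobenius $x \mapsto x^\theta$: a naive account gives a threshold of $q$ rather than $\sqrt q$, and getting the right $\sqrt q$ (which Jones needs and which matches the claimed $M_q \sim q^{1/2}$ in Lemma \ref{zariski-dense}) requires weighting the two "halves" of each $\F_q$-element — its value and its $\theta$-image — as independent coordinates, so that the relevant polynomial map lands in a two-variable polynomial ring where the total degree grows linearly in $\ell$ but one only needs to exceed $\sqrt q$ in one of the variables before hitting the relation $x^q = x$. The rest — the explicit multiplication formulas, the verification that $a,b \notin B$, and the base case of the induction — is routine given the presentation in Definition \ref{suz-def}. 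In any case, since the lemma is attributed to Jones \cite{jones}, the cleanest route for the paper is simply to cite that work; the sketch above indicates the shape of the argument and is reproduced in Appendix \ref{jones-app} (see Lemma \ref{zariski-dense} and its proof).
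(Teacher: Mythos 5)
There is a genuine gap, and it sits exactly at the heart of the lemma. Your plan is to fix one pair (you suggest $a=T$, $b=U(\alpha_0,0)$) and argue that the matrix entries of $w(a,b)$ are polynomials of controlled degree in $\alpha_0$ which ``would have to vanish identically'' if $w(a,b)=\id$. But to get a contradiction from low degree you must first know that these entry polynomials are not \emph{formally} zero, i.e.\ that the nontrivial word $w$ is not identically trivial on the one-parameter family you chose over $\overline{\F_2}$. That nonvanishing is the entire content of the lemma, and your proposal never supplies it: the ``height grows with each letter'' induction is not true as stated, because degrees of matrix entries need not increase under multiplication by a reduced letter (leading terms can cancel), and making such a growth argument rigorous is precisely a ping-pong/freeness argument over an infinite field. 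This is what the paper imports from Borel's theorem that word maps are dominant on $\SL_2(k)\times\SL_2(k)$ (or, as its footnote notes, from ping-pong in $\SL_2(k(t))$); without some such input the argument is circular. Your concrete choice also fails outright: $U(\alpha_0,0)\in B$, so the pair does not lie in $(\Sz(q)\setminus B)^2$ as required, and $T$ is an involution, so the nontrivial word $a^2$ of length $2$ vanishes identically on your family --- hence no single pair can witness nonvanishing for all short words, and you are forced to let the witnessing pair depend on $w$ and to work with a family rich enough (e.g.\ the full ten-parameter Bruhat parametrisation), which brings you straight back to the dominance question.

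The paper's route is different and resolves exactly this point. It first proves (Lemma \ref{jones-lemma}) that any polynomial of degree $\ll\sqrt q$ vanishing on $\Sz(q)\setminus B$ must vanish on all of $\Sp_4(\overline{\F_2})$: after passing to the big cell and clearing denominators, a vanishing identity on $\Sz(q)\setminus B$ becomes the vanishing on $\F_q^5$ of the polynomial obtained by substituting $\alpha^\theta,\beta^\theta,\gamma^\theta$ for $a,b,c$; Schwartz--Zippel forces this substituted polynomial to be formally zero, and the condition $2M<\theta$ guarantees that distinct monomials cannot collide under the substitution, so the original polynomial is formally zero. (This monomial-separation step is where the threshold $\sqrt q$ really comes from --- not from the degree-in-$\ell$ bookkeeping you sketch; note also that your worry about $\theta$ being iterated and producing $2^{O(\ell)}$ degrees is misplaced, since $\theta$ enters only once, in the parametrisation of the generators, and degrees grow linearly in the word length.) Applying this to the sixteen entry equations of $w(a,b)=\id$ upgrades a short word law on $\Sz(q)\setminus B$ to a word law on $\Sp_4(\overline{\F_2})$, and then Borel's theorem applied to an embedded $\SL_2(\overline{\F_2})$ gives the contradiction. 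To repair your proposal you would need to replace the unproved height-growth claim by an appeal of this kind (Borel/dominance, or an explicit ping-pong over a function field), at which point you would essentially have reconstructed Jones' argument.
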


\begin{proof}
This follows from the proof of \cite[Lemma 5]{jones}. For the convenience of the reader we sketch the argument in Appendix \ref{jones-app}. \end{proof}

Finally, we will require the following result of Gamburd, Hoori, Shahshahani, Shalev and Vir\'ag \cite{ghssv}.

\begin{lemma}[Large girth]\label{girth}\cite{ghssv}
Let $G = \Sz(q)$. There is an absolute constant $\kappa > 0$ such that, with probability $1 - o_{q\rightarrow \infty}(1)$, a randomly chosen pair $a,b \in G$ will be such that $w(a,b) \neq \id$ for all nontrivial words in the free group $F_2$ with length at most $\kappa\log q$.
\end{lemma}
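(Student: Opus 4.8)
The plan is to bound, for each fixed nontrivial word $w \in F_2$ of length $\ell \leq \kappa \log q$, the number of pairs $(a,b) \in \Sz(q)^2$ for which $w(a,b) = \id$, and then sum over all such words. There are at most $3 \cdot 4^{\ell-1} \leq 4^\ell$ words of length $\ell$ in $F_2$, so the total number of words of length at most $\kappa \log q$ is at most $4^{\kappa \log q + 1} = O(q^{4\kappa/\log 2})$, which is a small power of $q$ provided $\kappa$ is chosen small. Since $|\Sz(q)^2| \sim q^{10}$, it suffices to show that for each fixed nontrivial $w$ of length $\ell$, the variety $\{(a,b) : w(a,b) = \id\}$ contains at most $O(q^{10 - \eta})$ points for some $\eta > 0$ independent of $w$ (as long as $\ell$ is not too large); in fact a bound like $O_\ell(q^{9})$, or even $q^{10}/q^{c}$ with $c$ depending on $q$ but growing, would do after summing, as long as it beats the word count.

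The key input here is Lemma \ref{jones-result}: if $\ell = |w| < c\sqrt{q}$, then $w$ does \emph{not} vanish identically on $\Sz(q) \setminus B$, hence (a fortiori) the word map $w : \Sz(q)^2 \to \Sz(q)$ is not identically trivial. Viewing $\Sz(q)$ inside $\Sp_4(\overline{\F_2})$, the map $(a,b) \mapsto w(a,b)$ is a morphism of affine varieties whose coordinates are polynomials of degree $O(\ell)$ in the entries of $a,b$ (and their inverses, which are again polynomial of controlled degree since we are on a group of bounded dimension). Since this morphism is not identically $\id$, at least one of the (finitely many) defining polynomial equations of $\{w(a,b) = \id\}$ is a nontrivial polynomial of degree $O(\ell)$ on the irreducible variety $\Sz \times \Sz$ (which has dimension $10$ and is absolutely irreducible — this is part of what Jones establishes, and is also used in Lemma \ref{zariski-dense}). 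By the Lang--Weil / Schwartz--Zippel bound over $\F_q$, a nontrivial polynomial of degree $d$ on an absolutely irreducible variety of dimension $10$ vanishes on at most $O(d \cdot q^{9})$ points; so $\{w(a,b)=\id\}$ has at most $O(\ell q^9)$ points.

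Putting this together, the number of pairs $(a,b)$ satisfying some nontrivial relation of length at most $\kappa \log q$ is at most
\begin{equation*}
\sum_{\ell \leq \kappa \log q} 4^\ell \cdot O(\ell q^9) = O\big( (\kappa \log q) \, 4^{\kappa \log q} q^9 \big) = O\big( q^{9 + 4\kappa/\log 2 + o(1)} \big),
\end{equation*}
which is $o(q^{10})$ once $\kappa < \tfrac{1}{4}\log 2$, say. Dividing by $|\Sz(q)^2| \sim q^{10}$ gives the claimed $o_{q\to\infty}(1)$ proportion. I expect the main technical obstacle to be the geometric input rather than the counting: one must (i) confirm that $\Sz(q)$ sits inside $\Sp_4$ as an absolutely irreducible subvariety of dimension $10$ with $\F_q$-points, so that Lang--Weil applies with the right main term, and (ii) track that the word map has polynomial degree $O(\ell)$ — this requires knowing that inversion on $\Sz$ is given by polynomials of bounded degree, which follows from the explicit matrix parametrisation in Definition \ref{suz-def} together with the fact that $\Sz$ is a group of bounded dimension in a fixed ambient $\GL_4$. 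Both of these are essentially contained in Jones' work \cite{jones}, which is why we may appeal to \cite{ghssv}; for completeness we also indicate the argument in Appendix \ref{girth-app}.
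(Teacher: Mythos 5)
There is a genuine gap at the heart of your per-word estimate. You treat $\Sz$ as an absolutely irreducible algebraic subvariety of $\Sp_4(\overline{\F_2})$ of dimension $5$ (so that $\Sz\times\Sz$ has dimension $10$), cut out with bounded complexity, and then apply Lang--Weil/Schwartz--Zippel to a degree-$O(\ell)$ polynomial not vanishing on it to get $O(\ell q^9)$ bad pairs. No such variety exists: $\Sz(q)$ is a \emph{twisted} group, and its Zariski closure in $\Sp_4(\overline{\F_2})$ is all of $\Sp_4$, which is $10$-dimensional --- this is exactly the content of Lemma \ref{zariski-dense} (Jones' result shows Zariski \emph{density}, the opposite of what you need). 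The natural $10$-parameter description of a pair $(a,b)\in(\Sz(q)\setminus B)^2$ from Definition \ref{suz-def} involves the field automorphism $x\mapsto x^\theta$ with $\theta=2^{n+1}\sim\sqrt q$, so written honestly in those parameters the equations $w(a,b)=\id$ have degree $O(\ell\sqrt q)$, not $O(\ell)$; equivalently, $\Sz(q)$ is the fixed-point set of an exceptional endomorphism, a ``difference variety'' rather than the $\F_q$-points of a bounded-degree $5$-dimensional variety. Making your Lang--Weil step rigorous for such twisted groups is precisely what forces \cite{ghssv} to invoke Hrushovski's twisted Lang--Weil theorem \cite{hrushovski}, which is the deep, ineffective input the paper's appendix is written to avoid.

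The paper's route keeps your overall counting framework (sum over the $\ll q^{\kappa\log 3}$ short words; use Lemma \ref{jones-result} to see that for each such $w$ at least one of the $16$ matrix-entry equations is not identically zero on $(\Sz(q)\setminus B)^2$), but replaces your geometric step by an elementary ``twisted Schwartz--Zippel'' bound (Lemma \ref{simple-twist-bound}) for polynomials of the form $P(x_1,\dots,x_{10},x_1^{\theta},\dots,x_{10}^{\theta})$: either such an expression vanishes as a \emph{function} on $\F_q^{10}$, or it vanishes with probability $O(d q^{-1/2})$ --- the exponent $-1/2$ (rather than your $-1$) being the price of the degree-$\theta$ twist. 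This yields a per-word probability $O(q^{-1/2}\log q)$, hence the lemma for any $\kappa<1/(2\log 3)$; the refinement Lemma \ref{harder-twist} recovers a bound of Lang--Weil strength $O(\ell^2/q)$ per word and with it the girth constant of \cite{ghssv}. Note also that the dichotomy must be phrased at the level of functions on $\F_q$, not formal polynomials, since nonzero polynomials in the $20$ variables can vanish identically after the substitution $y_i=x_i^{\theta}$; your proposal does not address this point. (Two harmless slips: the number of reduced words of length $\ell$ in $F_2$ is $4\cdot 3^{\ell-1}$, and $4^{\kappa\log q}=q^{\kappa\log 4}$, not $q^{4\kappa/\log 2}$; neither affects the shape of the argument.)
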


Here, recall, $o_{q \to \infty}(1)$ denotes a quantity that goes to zero as $q \to \infty$.
Most of the details in the paper \cite{ghssv} are concerned with the case of Chevalley groups, and mention of twisted groups such as the Suzuki groups under consideration here is confined to a few lines. More importantly, the treatment of these groups depends on a paper of Hrushovski \cite{hrushovski} which uses methods of mathematical logic. If we were to simply quote \cite{ghssv}, then, our main theorem would be genuinely ineffective\footnote{The ineffectivity would be of the following type: one could give an explicit expansion constant $\kappa > 0$ which works for $\Sz(q)$, $q \geq q_0$, but no effective bound on $q_0$.}. For these reasons we give a self-contained and rather elementary proof of Lemma \ref{girth} in Appendix \ref{girth-app}, leading to a value of $\kappa$ which is half that obtained in \cite{ghssv}. Although it is not strictly necessary for our paper we then indicate a more complicated, but still elementary, proof of exactly the bound in \cite{ghssv}.

\section{The nonconcentration estimate}

The aim of this section is to prove Theorem \ref{non-conc}.
We may assume that $q$ is sufficiently large, since the claim is trivial for bounded $q$ by selecting $a, b$ to be an arbitrary pair of generators of $\Sz(q)$.  (It has long been known that such generators exist: see \cite{steinberg}.)

Note that it will be enough to prove this estimate for $n$ of the order of $\log q$ (up to multiplicative constants). Indeed, if $\mu_{a,b}^{(2n)}(H) < q^{-\delta}$ for some such $n$, then $\mu_{a,b}^{(n+m)}(H) = \E_x \mu_{a,b}^{(m)}(x)\mu_{a,b}^{(n)}(x^{-1}H) \leq (\mu_{a,b}^{(2n)}(H))^{1/2} < q^{-\frac{\delta}{2}}$ for all $m \geq 0$, since $\mu_{a,b}^{(2n)}(H)\geq \mu_{a,b}^{(n)}(x^{-1}H)\mu_{a,b}^{(n)}(Hx)=\mu_{a,b}^{(n)}(x^{-1}H)^2$.

We will in fact show that $(\ref{non-conc-eq})$ holds for randomly selected $a$ and $b$, with probability $1 - o_{q \rightarrow \infty}(1)$ as $q \rightarrow \infty$. Note that this implies that randomly selected $a$ and $b$ do generate $\Sz(q)$ if $q$ is sufficiently large.
By Proposition \ref{subgroup-structure} this task divides into two subtasks: we must handle the \emph{algebraic} case in which $H$ is 3-step solvable and the \emph{arithmetic} case in which $H = x^{-1}\Sz(q_0)x$ for some $\F_{q_0} \subsetneq \F_q, x\in G$.

\bigskip

\emph{The algebraic case.} Our argument here is almost identical to that in \cite{bg-sl2}.

Let $F_k$ be the free group on $k$ letters, and write $W_k(L)$ for the ball of radius $L$ about $\id$ in the word metric on $F_k$. If $G$ is a group, then for each $i = 0,1,2,3,\dots$ we define the $i$-fold commutator maps $\psi_i : G^{2^{i}} \rightarrow G$ by
\[ \psi_0(g) = g,\]
\[ \psi_1(g_0,g_1) = [g_0,g_1],\]
\[ \psi_2(g_{00}, g_{01}, g_{10}, g_{11}) = [ [g_{00}, g_{01}], [g_{10}, g_{11}]],\]
\[ \psi_3((g_{\omega})_{\omega \in \{0,1\}^3}) = [ [[g_{000}, g_{001}], [g_{010}, g_{011}]  ], [ [g_{100}, g_{101}], [g_{110}, g_{111}]]   ] \] and so on.
For definiteness (though it scarcely matters) we use the group theorists' definition of commutator, namely $[x,y] = x^{-1} y^{-1} x y$.

The following lemma is a very straightforward modification of \cite[Proposition 8]{bg-sl2}.

\begin{lemma}\label{solv-free}
Suppose that $S \subseteq W_k(L)$ is a set with the property that \[ \psi_l((s_{\omega})_{\omega \in \{0,1\}^l}) = \id\] for all $2^l$-tuples $(s_{\omega})_{\omega \in \{0,1\}^l} \subseteq S$. Then $|S| \ll_l L^{2l}$.
\end{lemma}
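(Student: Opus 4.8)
The plan is to induct on $l$. The base case $l=0$ asserts that a set $S\subseteq W_k(L)$ with $\psi_0(s)=s=\id$ for all $s\in S$ has $|S|\ll 1$, which is clear since $S\subseteq\{\id\}$. For the inductive step, suppose the claim holds for $l-1$ and that $S\subseteq W_k(L)$ satisfies $\psi_l((s_\omega))=\id$ for all $2^l$-tuples drawn from $S$. The key observation is that $\psi_l$ is built from $\psi_{l-1}$ applied to commutators: writing a generic element of $\{0,1\}^l$ as $(\omega,i)$ with $\omega\in\{0,1\}^{l-1}$ and $i\in\{0,1\}$, we have
\[ \psi_l\big((g_{\omega i})\big) = \psi_{l-1}\big(([g_{\omega 0},g_{\omega 1}])_{\omega\in\{0,1\}^{l-1}}\big).\]
So the hypothesis says that the set $S' := \{[s,t] : s,t\in S\}$ of commutators has the property that $\psi_{l-1}$ vanishes on all $2^{l-1}$-tuples from $S'$. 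Since $S\subseteq W_k(L)$, each commutator $[s,t]$ lies in $W_k(4L)$, so $S'\subseteq W_k(4L)$, and the inductive hypothesis (applied with $4L$ in place of $L$) gives $|S'|\ll_{l-1} (4L)^{2(l-1)} \ll_l L^{2l-2}$.

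It remains to convert a bound on the number of distinct commutators $[s,t]$ into a bound on $|S|$. This is the crux of the argument and is where I expect the main (though still elementary) obstacle to lie. Fix any $s_0\in S$; the map $t\mapsto [s_0,t]=s_0^{-1}t^{-1}s_0 t$ from $S$ to $S'$ has fibers that are cosets of the centralizer $Z(s_0)$ in the following sense: $[s_0,t_1]=[s_0,t_2]$ iff $t_1 t_2^{-1}$ commutes with $s_0^{-1}$ after conjugation — more precisely, $[s_0,t_1]=[s_0,t_2]$ forces $s_0 t_1 t_2^{-1} s_0^{-1} = t_1 t_2^{-1}$... wait, let me instead argue as in \cite[Proposition 8]{bg-sl2}: the number of $t\in S$ with $[s_0,t]$ equal to a fixed value is at most the number of solutions to $t^{-1}s_0 t = $ (fixed element), i.e.\ at most $|S\cap C|$ where $C$ is a single coset of the centralizer of (a conjugate of) $s_0$; one then iterates, using that two elements of $S$ in the same such coset have commutator lying in a group one step further down the derived series. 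The cleanest bookkeeping: show by a secondary induction that $S$ meets each coset of the $(l-j)$-th derived subgroup of $\langle S\rangle$ localized appropriately in $\ll_l L^{2j}$ points, with the outer commutator bound providing the $j$ step and the centralizer-fiber argument providing the passage from $j$ to $j+1$. Multiplying the $l$ factors of size $O(L^2)$ yields $|S|\ll_l L^{2l}$.

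An alternative, and perhaps more transparent, route is to avoid re-deriving the fiber structure by hand and instead invoke \cite[Proposition 8]{bg-sl2} essentially verbatim, since that proposition already handles exactly the $k$-generated free-group setting; our $\psi_l$ are the iterated commutator maps appearing there, and the only modification is cosmetic (we allow an arbitrary number $k$ of generators rather than two, but $W_k(L)$ is used only through the crude containment $S'\subseteq W_k(4L)$, which holds for any $k$). I would present the induction skeleton above and then cite \cite[Proposition 8]{bg-sl2} for the fiber-counting step, noting that the constant depends only on $l$. The main thing to be careful about is tracking the factor of $4$ (or whatever constant bounds the word-length blow-up under a single commutator) through the $l$ stages of the induction, so that it is absorbed into the $\ll_l$ and does not, e.g., produce an $L$-dependence in the implied constant.
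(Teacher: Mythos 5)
Your induction skeleton---pass to the commutator set $S' = \{[s,t] : s,t \in S\} \subseteq W_k(4L)$, observe that the $(l-1)$-fold commutators vanish on it, and apply the inductive hypothesis---is exactly the structure of the paper's proof. The genuine gap is the step you yourself flag as the crux: converting the bound on $|S'|$ into a bound on $|S|$, i.e.\ bounding the fibres of $t \mapsto [s_0,t]$. You begin correctly (a fibre is contained in a coset of a centralizer), but then drift into a ``secondary induction over cosets of the $(l-j)$-th derived subgroup of $\langle S\rangle$ localized appropriately,'' which is not an argument: nothing you state bounds the intersection of $S$ with such cosets (derived subgroups of $\langle S\rangle$ are themselves huge free groups), and no descent along the derived series is needed at this point. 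The missing input is a quantitative free-group fact: if $[b,s_1]=[b,s_2]$ with $b \neq \id$, then $b$ commutes with $s_1s_2^{-1}$; since centralizers in a free group are cyclic, there are a word $x$ and integers $m,n$ with $b=x^m$, $s_1s_2^{-1}=x^n$, the length bounds force $|m|,|n|\leq 2L$, and since $x^m$ together with $m$ determines $x$, a fixed $b$ admits at most $(4L+1)^2$ possible ratios $s_1s_2^{-1}$. Hence each fibre has size $O(L^2)$, which is what produces the factor $L^2$ per stage and the recursion $f(k,l,L)\leq (5L)^2 f(k,l-1,4L)$ (the paper runs this via pigeonholing onto one large fibre $S_0$ and double counting; bounding every fibre works equally well). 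Without this free-group count your proof does not close. A small additional point: the fibre argument requires the fixed element to be nontrivial, since the centralizer of $\id$ is everything; the possible membership $\id\in S$ costs only $+1$.

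On the fallback of invoking \cite[Proposition 8]{bg-sl2} ``essentially verbatim'': that proposition is precisely the result this lemma modifies, and what you would be citing is not its statement but the internal fibre-counting step of its proof---i.e.\ exactly the cyclic-centralizer argument above---so this does not substitute for supplying the step. Also, the modification is not only the passage from $2$ to $k$ generators; the lemma here is for general $l$, though your outer induction does handle that, so once the fibre bound is written out the rest of your argument matches the paper's and gives $|S|\ll_l L^{2l}$ as required.
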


We remark that we will only need this lemma in the case $k=2, l=3$, though it is scarcely more difficult to establish the general case.

\begin{proof}
Write $f(k,l,L)$ for the smallest function which works in the claimed bound; thus our desire is to show that $f(k,l,L) \ll_l L^{2l}$. Suppose that $S$ is a set with the stated property. Then the set $S' := \{[s,s'] : s, s' \in S\}$ is a subset of $W_k(4L)$ with the property that all the $(l-1)$-fold commutator maps are trivial on $S'$. Therefore we have
\[ |S'| \leq f(k,l-1,4L).\]
By the pigeonhole principle it follows that there exists some $a \in S'$ and $b \in S$ such that there is a set $S_0 \subseteq S$,
\begin{equation}\label{eq11}|S_0|  \geq |S|/f(k,l-1,4L),\end{equation} such that $[b,s] = a$ for all $s \in S_0$.

Suppose that $s_1,s_2 \in S_0$. Then we have $s_1^{-1} b s_1 = s_2^{-1} b s_2$, which implies that $b$ commutes with $s_1 s_2^{-1}$. By standard facts about the free group this implies that there is some word $x$ and some integers $m,n$, $|m| , |n| \leq  2L$ such that $b = x^m$ and $s_1 s_2^{-1} = x^n$. Since knowledge of $x^m$ and $m$ uniquely determines $x$ (another standard fact about the free group), we see that, assuming $b$ is fixed, there are no more than $(4L + 1)^2 < (5L)^2$ possible values for $s_1 s_2^{-1}$.

Double-counting pairs of elements of $S_0$, it follows that $|S_0|^2 \leq (5L)^2|S_0|$.
Comparing this with \eqref{eq11} of course yields
\[ |S| \leq (5L)^2 f(k,l-1,4L).\] By the definition of $f$ this means that
\[ f(k,l,L) \leq (5L)^2 f(k, l-1,4L),\] and hence
\[ f(k,l,L) \leq 5^{2l} L^2(4L)^2 \dots (4^{l-1}L)^2 \cdot f(k,0, 4^l L) \leq 5^{2l}4^{l^2} L^{2l} f(k,0,4^l L).\]
This concludes the proof.
\end{proof}

We now have enough tools to conclude the analysis of the algebraic case. Select a random pair of elements $a,b \in \Sz(q)$. By Proposition \ref{girth} these are free up to length $ \kappa \log q$ with probability $1 - o_{q \rightarrow \infty}(1)$. Set $n_0 := \kappa \log q/100$ (say), and suppose that
\[ \mu_{a,b}^{(n)}(H) \geq \eta,\] for some $n \geq n_0$ and for some $3$-step solvable group $H$. Then by a simple averaging argument there is some $x$ such that
\[ \mu_{a,b}^{(n_0)}(Hx) \geq \eta.\] As in the argument at the start of the section, this implies that
\[ \mu_{a,b}^{(2n_0)}(H) \geq \eta^2.\]
Note that $\mu_{a,b}$ is counting walks of length $2n_0$ rather than distinct words; for example if $n_0 = 3$ then $ab$ could be counted as $abb^{-1} a a^{-1} b$. However, just as in \cite[p. 637]{bg-sl2}, it follows from Kesten's celebrated thesis \cite{kesten} that no more than $(2\sqrt{3})^{2n_0}$ of the $4^{2n_0}$ walks of length $2n_0$ starting from the identity end up at any given point $x$.

Now words in $a,b$ of length up to $16(2n_0)$ behave freely, and so we can bound $\mu_{a,b}^{(2n_0)}(H)$ above by $(\sqrt{3}/2)^{2n_0}$ times $|W_2(2n_0) \cap H|$. Since $H$ is $3$-step solvable, the $3$-fold iterated commutator $\psi_3$ vanishes on $H^8$. Applying Lemma \ref{solv-free}, we thus obtain
\[ \mu_{a,b}^{(2n_0)}(H) \ll n_0^8 (\sqrt{3}/2)^{2n_0}.\]
Since $n_0 \gg \log q$, it follows that $\eta < q^{-\delta}$ for some suitably small absolute constant $\delta$. This concludes the proof of the algebraic case.

\bigskip

\emph{The arithmetic case.} Recall that our aim is to show that
\[ \sup_{x\in G, \F_{q_0} \subsetneq \F_q}\mu_{a,b}^{(n)}(x^{-1} \Sz(q_0)x) < q^{-\delta}\] for all $n \sim C\log q$, for some choice of $a,b \in G=\Sz(q)$ and for some $C, \delta >0$.

We will in fact establish the bound
\begin{equation}\label{to-prove}
\E_{a,b \in \Sz(q)}\sup_{x \in G, \F_{q_0} \subsetneq \F_q}\mu_{a,b}^{(n)}(x^{-1} \Sz(q_0)x) < q^{-\delta},\end{equation}
from which the stated result is immediate by Markov's inequality.

If $t \in \SL_4(q)$ is a matrix, write
\[ \lambda^4 + c_1(t)\lambda^3 + c_2(t)\lambda^2 + c_3(t)\lambda + 1 := \det(t+\lambda)\] for the characteristic polynomial of $t$; thus, for example, $c_1(t) = \tr(t)$ and $c_3(t) = \tr(t^{-1})$. Since the characteristic polynomial is invariant under conjugation, for given $a,b$ the supremum in \eqref{to-prove} is bounded above by

\[ \P_w\big(c_1(w(a,b)), c_2(w(a,b)), c_3(w(a,b)) \in \bigcup_{\F_{q_0} \subsetneq \F_q} \F_{q_0}\big),\] where the probability is taken over all words $w$ of length $n \sim C\log q$.  We conclude that
\begin{equation}\label{eq956}
 \eqref{to-prove} \leq \sigma_1 + \sigma_2
\end{equation}
where
\begin{equation}\label{sigma1-def}
 \sigma_1 := \E_{a,b \in \Sz(q)}\sum_{i=1}^3\sum_{x \in \bigcup_{\F_{q_0} \subsetneq \F_q} \F_{q_0}, x \neq 0} \P_w(c_i(w(a,b)) = x)
\end{equation}
and
\begin{equation}\label{sigma2-def}
 \sigma_2 := \E_{a,b \in \Sz(q)}\P_w( c_1(w(a,b)) = c_2(w(a,b)) = c_3(w(a,b)) = 0).
\end{equation}

To bound these quantities $\sigma_1$ and $\sigma_2$, we use the following simple lemma.

\begin{lemma}[Schwartz-Zippel type lemma]\label{simple-twist-bound}
Suppose that $\F=\F_q$ for some $q = 2^{2n + 1}$, $n > 1$, and that $P : \F^k \times \F^k \rightarrow \F$ is a polynomial in $2k$ variables of degree at most $d$ in any one of them. Set $\theta := 2^{n+1}$. Then either $P(x_1,\dots,x_k, x_1^{\theta},\dots, x^{\theta}_k) = 0$ for all $x_1,\ldots,x_k \in \F$, or else the probability that $P(x_1,\dots,x_k ,x^{\theta}_1,\dots,x^{\theta}_k) = 0$ for a random choice of $x_1,\dots, x_k \in \F$ is $O(kdq^{-1/2})$.
\end{lemma}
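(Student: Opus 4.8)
The plan is to reduce the two-variable statement to the classical Schwartz--Zippel lemma in $k$ variables, by exploiting the structure of the map $x \mapsto x^{\theta}$ and the fact that $\F_q$ has characteristic $2$. The key observation is that, although $Q(x_1,\dots,x_k) := P(x_1,\dots,x_k,x_1^{\theta},\dots,x_k^{\theta})$ is \emph{a priori} only a function on $\F_q^k$ rather than a bona fide polynomial of controlled degree, one can linearize it. Indeed, write $\F_q$ as a vector space over $\F_2$ of dimension $2n+1$, pick a basis, and express each $x_j$ in coordinates $x_j = \sum_{m} x_{j,m} e_m$ with $x_{j,m} \in \F_2$. Since Frobenius $x \mapsto x^2$ and hence $x \mapsto x^{\theta} = x^{2^{n+1}}$ are $\F_2$-linear maps on $\F_q$, each coordinate of $x_j^{\theta}$ is an $\F_2$-linear form in the coordinates $x_{j,m}$. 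Substituting, $Q$ becomes a polynomial in the $k(2n+1)$ Boolean variables $x_{j,m}$; moreover, because $x_{j,m}^2 = x_{j,m}$ on $\F_2$, we may reduce each coordinate of $x_j$ itself to a multilinear (degree $\leq 2n+1$) expression in the $x_{j,m}$, so the original degree-$\leq d$ dependence of $P$ on the variable $x_j$ (together with its $\theta$-twin $x_j^{\theta}$) contributes degree $O(d(2n+1))$ in the block of Boolean variables attached to index $j$. Alternatively — and more cleanly for the stated $q^{-1/2}$ bound — one keeps $\F_q$ as the ground field but passes to the extension $\F_{q^2} \supseteq \F_q$ in which $\theta$-powering is a genuine power map.

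Here is the cleaner route I would actually write up. Work over $\F_{q^2}$. For $x \in \F_q$ one has $x^{\theta \cdot \theta} = x^2$, so $\theta$ acts as a ``half-Frobenius''; on the quadratic extension one can introduce a new indeterminate $y_j$ playing the role of $x_j^{\theta}$, subject to the relation $y_j^{\theta} = x_j^2$, but it is simpler to note the following. The set $\{(x, x^{\theta}) : x \in \F_q\} \subseteq \F_q^2$ is the $\F_q$-points of the curve (a twisted line) cut out by $y = x^{\theta}$; this is an irreducible curve, and the graph $\{(x_1,\dots,x_k,x_1^{\theta},\dots,x_k^{\theta})\}$ is a product of $k$ such curves, an irreducible variety $V \subseteq \A^{2k}$ of dimension $k$ and of degree $O(\theta^k)$ — but crucially, parametrized by the single affine space $\A^k$ via $(x_1,\dots,x_k)\mapsto(x_1,\dots,x_k,x_1^\theta,\dots,x_k^\theta)$. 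So $Q = P|_V \circ (\text{parametrization})$ is, as a function on $\F_q^k$, the evaluation of a polynomial of degree $O(kd\theta) = O(kd q^{1/2})$ in $x_1,\dots,x_k$: replacing each occurrence of $x_j^{\theta}$ (a formal symbol) appearing in $P$ by the actual polynomial $x_j^{\theta}$ multiplies the degree in $x_j$ by at most $\theta$, so $P(x_1,\dots,x_k,x_1^{\theta},\dots,x_k^{\theta})$ is a genuine polynomial in $x_1,\dots,x_k$ of degree at most $d\theta$ in each variable, total degree $O(kd\theta)$. Then apply Schwartz--Zippel: a nonzero polynomial of total degree $D$ over $\F_q$ vanishes on at most a $D/q$ fraction of $\F_q^k$, giving the bound $O(kd\theta/q) = O(kd q^{-1/2})$ as claimed. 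The dichotomy ``identically zero, or vanishes on $\leq$ this fraction'' is then exactly the Schwartz--Zippel dichotomy for the polynomial $\tilde P(x_1,\dots,x_k) := P(x_1,\dots,x_k,x_1^\theta,\dots,x_k^\theta)$, \emph{provided one checks that $\tilde P$ is the zero polynomial only when $P$ restricted to the graph vanishes}, which is immediate since a polynomial vanishing at all $\F_q$-points of $\A^k$ with $|\F_q| > d\theta \cdot k$ is the zero polynomial (again Schwartz--Zippel), so the two notions of ``identically zero'' coincide here.

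The one genuine subtlety — and the step I would flag as the main obstacle — is the bookkeeping of the degree after the substitution $x_j^{\theta} \rightsquigarrow x_j^{\theta}$: one must be careful that ``degree at most $d$ in the variable playing the role of $x_j^\theta$'' turns into ``degree at most $d\theta$ in $x_j$'', and that the degree-$d$ dependence on $x_j$ itself is not somehow amplified. This is fine because $P$ treats $x_j$ and (its formal $\theta$-partner) as independent variables, so the total degree in $x_j$ after substitution is at most $d + d\theta \leq 2d\theta$; summing over the $k$ indices gives total degree $\leq 2kd\theta$, hence the vanishing probability is at most $2kd\theta/q = 2kd q^{-1/2}$, which is $O(kdq^{-1/2})$. (A second, entirely cosmetic, point: Schwartz--Zippel is usually stated for total degree, but one can also invoke the version bounding the vanishing locus of a polynomial of degree $\le d_j$ in $x_j$ by $\sum_j d_j/q$ — either form suffices.) No deep input is needed beyond Schwartz--Zippel and the $\F_2$-linearity (equivalently, polynomiality) of $x\mapsto x^\theta$; the lemma is genuinely elementary, and the proof is short once the degree computation is set up correctly.
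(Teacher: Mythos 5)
Your proof is correct, and at its core it rests on the same mechanism as the paper's: treat $x^\theta$ as the honest power map $x \mapsto x^{2^{n+1}}$, accept the resulting degree inflation by a factor $\theta+1 = O(q^{1/2})$, and then count zeros elementarily; this is exactly where the exponent $q^{-1/2}$ comes from in both arguments. The difference is organizational. You substitute globally, observe that $\tilde P(x_1,\dots,x_k) := P(x_1,\dots,x_k,x_1^\theta,\dots,x_k^\theta)$ is a genuine polynomial of total degree at most $kd(\theta+1)$, and quote the classical Schwartz--Zippel bound $D/q$; the paper instead runs the Schwartz--Zippel induction on $k$ directly in the twisted setting, expanding in $x_1$ and $x_1^\theta$ and only converting to an ordinary one-variable polynomial (of degree at most $d(\theta+1)$) at the last step. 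Both give the same bound $kd(\theta+1)/q = O(kdq^{-1/2})$, so what your route buys is brevity (Schwartz--Zippel as a black box) at the cost of nothing. Two small remarks. First, the ``subtlety'' you flag about the two notions of identically zero is a non-issue: the only implication needed is that if the function $(x_1,\dots,x_k)\mapsto P(x_1,\dots,x_k,x_1^\theta,\dots,x_k^\theta)$ is not identically zero on $\F_q^k$ then $\tilde P$ is a nonzero polynomial, which is trivial since the zero polynomial evaluates to zero everywhere; the converse you verify, which requires $q > kd\theta$, is never used, and the form of Schwartz--Zippel bounding the zero set of a nonzero polynomial of total degree $D$ by $Dq^{k-1}$ holds for every $D$, so no case split on the size of $d$ is needed. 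Second, the preliminary material on $\F_2$-linearization, passing to $\F_{q^2}$, and the degree of the graph as a variety is unnecessary (and slightly misleading, since $x\mapsto x^\theta$ is already the power map $x\mapsto x^{2^{n+1}}$ on $\F_q$ itself); the clean write-up is just your substitution-plus-Schwartz--Zippel paragraph with the degree bookkeeping $d + d\theta \leq 2d\theta$ per variable.
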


\begin{proof}
Write $f(q,k,d)$ for the least quantity that will work as a bound in this lemma. Let $P: \F^k \times \F^k \to \F$ be a polynomial such that $P(x_1,\dots, x_k, x^{\theta}_1,$ $\dots, x^{\theta}_k)$ does not vanish identically.  Write
\begin{equation}\label{eq47} P(x_1,\dots, x_k, x^{\theta}_1,\dots, x^{\theta}_k) = \sum_{0 \leq j, j' \leq d} p_{j,j'} (x_2,\dots, x_k, x^{\theta}_2,\dots, x^{\theta}_k) x_1^j x^{j'\theta}_1,\end{equation}
where the $p_{j,j'}$ are polynomials of degree at most $d$ in each variable. Then there exists some $j, j'$ for which
\[ p_{j,j'}(x_2,\dots,x_k, x^\theta_2,\dots ,x^\theta_k)\] does not vanish identically. For a randomly selected $2k$-tuple $(x_1,\dots, x_k, x^\theta_1,$ $\dots, x^\theta_k)$ we distinguish two cases: either \[ p_{j,j'}(x_2,\dots,x_k, x^\theta_2,\dots ,x^\theta_k) = 0,\] or this is not so. The chance of the first case occurring is at most $f(q,k-1,d)$. In the second case, let us fix $x_2,\dots, x_k, x^\theta_2,\dots, x^\theta_k$ and count the number of possibilities for $x_1, x^\theta_1$. In this case \eqref{eq47} becomes a nontrivial polynomial equation of the form $p(x_1,x^\theta_1) = 0$. Recalling that $x^\theta_1 = x_1^{2^{n+1}}$, we may regard this as a polynomial equation of degree at most $d(2^{n+1} + 1)$ in $x_1$, and therefore it has at most $d(2^{n+1} + 1)$ solutions. Putting these observations together we obtain the inequality
\[ f(q,k,d) \leq f(q,k-1,d) + q^{-1}d(2^{n+1} + 1).\] Iterating we obtain
\[ f(q,k,d) \leq kdq^{-1}(2^{n+1} + 1),\] which implies the claimed bound.
\end{proof}

\emph{Remark.} By using Lemma \ref{harder-twist}, which gives a bound of $2d^2$ instead of the trivial bound of $d(2^{n+1} + 1)$ for the number of solutions to $p(x_1,x_1^{\theta}) = 0$, it is possible to obtain the stronger bound $f(q,k,d) \leq 2kd^2/q$ in this lemma. We will not require this improvement in this paper.

Using Lemma \ref{simple-twist-bound} we can bound the probability that $c_i$ concentrates at a non-zero value.

\begin{lemma}[Non-concentration away from zero]\label{word-trace-lem}
Suppose that $w$ is a word in the free group $F_2$ of length $L$, that $x \in \F_q \setminus \{0\}$, and that $i = 1,2$ or $3$. Then if $a,b$ are selected randomly from $\Sz(q)$ we have
\[ \P_{a,b} ( c_i(w(a,b)) = x) \ll q^{-1/2}L.\]
\end{lemma}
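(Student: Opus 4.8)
The plan is to realize $c_i(w(a,b))$ as (the restriction to $\Sz(q)$ of) a polynomial in the matrix entries of $a$ and $b$, and then to feed this into the Schwartz–Zippel-type Lemma \ref{simple-twist-bound}. Recall from Definition \ref{suz-def} that every element of $\Sz(q)$ can be written using the parameters $\alpha,\beta,\alpha',\beta' \in \F_q$ and $\gamma \in \F_q^\times$, where the matrix entries are polynomials in $\alpha,\beta,\alpha',\beta',\gamma,\gamma^{-1}$ together with the ``half-Frobenius'' values $\alpha^\theta,\beta^\theta,\ldots$; equivalently, one can use the ambient parametrisation $u(a,b,\alpha,\beta)$, $d(c,\gamma)$, $T$ and simply track which matrices are hit by $\Sz(q)$. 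The key structural point I would exploit is that in this parametrisation every matrix entry of an element of $\Sz(q)$ is a \emph{polynomial} of bounded degree in a set of free parameters $x_1,\ldots,x_k \in \F_q$ and their images $x_1^\theta,\ldots,x_k^\theta$ — precisely the shape required by Lemma \ref{simple-twist-bound}. (One should clear the denominators $\gamma^{-1}$, or alternatively carry $\gamma$ and an auxiliary parameter $\gamma'$ with the relation $\gamma\gamma'=1$; either way the degree stays bounded by an absolute constant.)

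First I would fix a word $w \in F_2$ of length $L$. Writing $a = a(x_1,\ldots,x_k,x_1^\theta,\ldots)$ and likewise $b$, the product $w(a,b)$ is a product of $L$ matrices (counting $a^{-1}$ and $b^{-1}$), each of whose entries is a polynomial of degree $O(1)$ in the parameters and their $\theta$-twists. Hence each entry of $w(a,b)$ is a polynomial of degree $O(L)$ in $2k$ variables $x_1,\ldots,x_k,x_1^\theta,\ldots,x_k^\theta$, with $k = O(1)$. The coefficients $c_i$ are themselves fixed polynomials (of degree $\le 4$) in these entries, so $c_i(w(a,b)) - x$ is a polynomial $P$ in the $2k$ variables of degree $O(L)$ in any one of them. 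Next I would apply Lemma \ref{simple-twist-bound}: either $P$ vanishes identically on the twist-diagonal, or $\P_{a,b}(c_i(w(a,b)) = x) \ll kdq^{-1/2} \ll L q^{-1/2}$, which is the claimed bound. (A cosmetic point: sampling $(a,b)$ uniformly from $\Sz(q)\times\Sz(q)$ is the same, up to the bounded-to-one parametrisation and the excision of $\gamma = 0$, as sampling the parameters uniformly; the resulting $O(1/q)$ discrepancy is absorbed into the $\ll q^{-1/2}L$.)

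The main obstacle is ruling out the degenerate case where $P$ vanishes identically — i.e. where $c_i(w(a,b))$ is constantly equal to the fixed nonzero value $x$ for \emph{all} $a,b \in \Sz(q)$. This is exactly where Lemma \ref{jones-result} (no short word laws) enters. If $c_i(w(a,b)) \equiv x \neq 0$ on $\Sz(q)\setminus B$, then in particular $\tr$ and $\tr(\cdot^{-1})$ of $w(a,b)$ are constrained, and one can manufacture from $w$ a nontrivial word of length $O(L)$ that vanishes identically on $\Sz(q)\setminus B$ — for instance by comparing $w(a,b)$ against $w(a',b')$ for independent pairs, or by using that a word with a fixed nonzero characteristic-polynomial coefficient cannot be the identity yet its relevant commutator-type modification is. Since Lemma \ref{jones-result} forces any such law to have length $\ge c\sqrt q$, the identical-vanishing alternative is impossible once $L < c\sqrt q$; and when $L \ge c\sqrt q$ the asserted bound $\ll q^{-1/2}L$ is $\gg 1$ and hence trivially true. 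So the lemma holds in all ranges of $L$, with the case analysis $L < c\sqrt q$ versus $L \ge c\sqrt q$ handling the two alternatives coming out of Lemma \ref{simple-twist-bound}.
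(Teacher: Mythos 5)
Your setup (parametrising $\Sz(q)\setminus B$ by ten parameters, clearing the $\gamma^{-1}$ denominators, observing that each entry of $w(a,b)$ — and hence $c_i(w(a,b))-x$ — is a polynomial of degree $O(L)$ in the parameters and their $\theta$-twists, then invoking Lemma \ref{simple-twist-bound}) matches the paper. The gap is in the one step that actually needs an idea: ruling out that $P$ vanishes identically on the twisted diagonal. You propose to ``manufacture from $w$ a nontrivial word of length $O(L)$ that vanishes identically on $\Sz(q)\setminus B$'' and then appeal to Lemma \ref{jones-result}, but you never exhibit such a word, and the hints you give do not obviously produce one: comparing $w(a,b)$ with $w(a',b')$ for independent pairs only says the two matrices share the value of one characteristic-polynomial coefficient, which does not make $w(a,b)w(a',b')^{-1}$ (a word in four letters, in any case outside the scope of Lemma \ref{jones-result}) equal to the identity, and the ``commutator-type modification'' is not specified. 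So the degenerate alternative coming out of Lemma \ref{simple-twist-bound} is not actually excluded for $L < c\sqrt q$, which is exactly the range that matters.

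The paper closes this step without any word-law input, and the fix is one line: specialise to $a=b=T$. Since $T = U(0,0)D(1)TU(0,0)$, this is a legitimate point of the parametrised set ($\gamma=1\neq 0$), and $w(T,T)$ is a power of $T$, whose characteristic polynomial in characteristic $2$ is $\lambda^4+1$; hence $c_1=c_2=c_3=0$ at this point, so $c_i(w(a,b))-x$ takes the nonzero value $-x$ there and $P$ cannot vanish identically. This is precisely where the hypothesis $x\neq 0$ is used, and it explains the division of labour in the paper: Lemma \ref{jones-result} is reserved for the complementary case $c_1=c_2=c_3=0$ (the bound on $\sigma_2$, via $w(a,b)^4=\id$), not for the present lemma. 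Your fallback observation that the bound is trivial when $L\geq c\sqrt q$ is fine but does not repair the missing argument in the main range.
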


\begin{proof} The reader may wish to recall the explicit definition of $\Sz(q)$ given in Definition \ref{suz-def}. The probability that either $a$ or $b$ lies in the subgroup $B$ is at most $O(q^{-2})$, which is acceptable. For all other $a,b$ we may use the parametrisation $U(\alpha, \beta) D(\gamma) T U(\alpha', \beta')$. To parametrise $a$ and $b$ we require 10 variables, which we denote by $x_1,\dots, x_{10}$. Clearing denominators arising from the appearance of $\gamma^{-1}$ and $\gamma^{-\theta}$ in $D(\gamma)$, we may rewrite the equation $c_i(w(a,b)) = x$ in the form
\[ P(x_1,\dots, x_{10}, x^{\theta}_1,\dots, x^{\theta}_{10}) = 0,\] where $P$ is a polynomial in 20 variables, of degree $O(L)$ in each of them (note that every matrix in $\Sp_4(q)$ has determinant one, so there is no issue when $w$ contains inverses of $a$ and $b$). This polynomial does not vanish identically, since when $a = b = T$ the characteristic polynomial of any word $w(a,b)$ is simply $\lambda^4 + 1$, so in this case $c_1(w(a,b)) = c_2(w(a,b)) = c_3(w(a,b)) = 0$. We are assuming, however, that $x \neq 0$.
The result now follows from Lemma \ref{simple-twist-bound}.
\end{proof}

An immediate corollary of Lemma \ref{word-trace-lem} is a bound for the quantity $\sigma_1$ from \eqref{sigma1-def}.

\begin{corollary}\label{sigma_1-cor}
We have $\sigma_1 \ll q^{-1/6} \log q$.
\end{corollary}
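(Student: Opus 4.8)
The plan is to bound $\sigma_1$ directly from its definition in \eqref{sigma1-def} by summing the estimate of Lemma \ref{word-trace-lem} over all the relevant $x$ and over $i = 1,2,3$, and then choosing the word length $n$ appropriately. Recall that $\sigma_1 = \E_{a,b}\sum_{i=1}^3 \sum_{x} \P_w(c_i(w(a,b)) = x)$, where the inner sum is over nonzero $x$ lying in some proper subfield $\F_{q_0} \subsetneq \F_q$. Interchanging the expectation over $(a,b)$ with the probability over $w$ (both are just normalised counting measures, so Fubini applies trivially), we get $\sigma_1 = \sum_{i=1}^3 \sum_x \E_w \P_{a,b}(c_i(w(a,b)) = x)$, and for each fixed word $w$ of length $n$ and each fixed nonzero $x$, Lemma \ref{word-trace-lem} gives $\P_{a,b}(c_i(w(a,b)) = x) \ll q^{-1/2} n$.

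The remaining ingredient is to count the number of admissible values of $x$. Since $q = 2^{2n+1}$, any proper subfield $\F_{q_0}$ has $q_0 \leq q^{1/3}$ (the smallest possible index of a proper subfield is $3$, as $2n+1$ is odd and hence has no factor of $2$), so $|\bigcup_{\F_{q_0} \subsetneq \F_q} \F_{q_0}| \ll q^{1/3}$: indeed the union is a union of at most $O(\log q)$ subfields each of size at most $q^{1/3}$, and even more crudely the total number of elements lying in \emph{some} proper subfield is $O(q^{1/3}\log q)$, which we may absorb. Thus, summing over the $O(q^{1/3})$ choices of $x$ (up to a harmless log factor), over $i \in \{1,2,3\}$, and over all words $w$ of length $n$, we obtain $\sigma_1 \ll q^{1/3} \cdot q^{-1/2} n = q^{-1/6} n$ up to logarithmic factors. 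Since $n \sim C \log q$, this gives $\sigma_1 \ll q^{-1/6} \log q$, as claimed. (One should take a little care that the implied constant in Lemma \ref{word-trace-lem} does not depend on $w$ — it does not, since the degree bound $O(L)$ there is uniform over words of a given length — and that the $O(\log q)$ loss from the number of subfields is swallowed by the gap between $q^{-1/6}$ and any fixed smaller power; here we simply state the bound with the $\log q$ factor retained.)

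There is no real obstacle here: this corollary is a routine consequence of Lemma \ref{word-trace-lem} together with the elementary fact that proper subfields of $\F_{2^{2n+1}}$ are small, and the only thing to watch is the bookkeeping of the $\log q$ factors and the fact that we are summing a per-word bound over the (exponentially many in $n$, but that is fine since we are inside the $\E_w$) words. The genuinely substantive work was already done in establishing Lemma \ref{simple-twist-bound} and Lemma \ref{word-trace-lem}; the present step merely assembles them.
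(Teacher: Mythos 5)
Your argument is correct and is essentially the paper's own proof: apply Lemma \ref{word-trace-lem} per word and per nonzero $x$, then use that $2n+1$ is odd so every proper subfield has size at most $q^{1/3}$. The only cosmetic difference is that the paper bounds $\sum_{\F_{q_0} \subsetneq \F_q} |\F_{q_0}|$ by the geometric series $\sum_{r \leq (2n+1)/3} 2^r \ll q^{1/3}$, which removes the extra $\log q$ from the number of subfields that you instead absorb (harmlessly, since any power saving suffices for the application).
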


\begin{proof} By Lemma \ref{word-trace-lem} and the definition of $\sigma_1$ we have
\[ \sigma_1 \ll  | \bigcup_{\F_{q_0} \subsetneq \F_q} \F_{q_0}|  \cdot q^{-1/2} L.\]
Now simply observe that
\[ \sum_{\F_{q_0} \subsetneq \F_q} |\F_{q_0}| \leq \sum_{r \leq (2n+1)/3} 2^r \ll q^{1/3},\] this being a consequence of the fact that $2n + 1$ is odd, and hence has no proper factor larger than $(2n + 1)/3$.
\end{proof}

It remains to bound the quantity $\sigma_2$ from \eqref{sigma2-def}.  Note that if $c_1(w(a,b)) = c_2(w(a,b)) = c_3(w(a,b)) = 0$ then the characteristic polynomial of $w(a,b)$ is simply $\lambda^4 + 1$ and hence, by the Cayley-Hamilton theorem, $w(a,b)^4 = \id$.
Once again, the probability that either $a$ or $b$ lies in the subgroup $B \subseteq \Sz(q)$ is just $O(q^{-2})$.
It therefore suffices to bound
\[ \E_{a,b \in \Sz(q)} \P_w (a,b \notin B, w(a,b)^4 = \id).\]
The average over $w$ involves words of length only $O(\log q)$. By Lemma \ref{jones-result}, none of these words $w(a,b)^4$ (except the trivial word) vanishes identically on $\Sz(q) \setminus B$.

Parametrise pairs of elements of $a,b \in \Sz(q) \setminus B$ as before, using ten parameters $x_1,\dots, x_{10}$. The condition that $w(a,b)^4 = \id$ leads to sixteen polynomial equations, each of degree $O(\log q)$ in every variable. By Jones' result (and for $Q$ sufficiently large), at least one of these polynomials must be nontrivial. Applying
Lemma \ref{simple-twist-bound} we may conclude that
\begin{equation} \sigma_2 \ll q^{-1/2} \log q.\label{sigma_2}\end{equation}

Combining this with Corollary \ref{sigma_1-cor} and recalling the definition of $\sigma_1,\sigma_2$  (cf. \eqref{eq956}) we obtain \eqref{to-prove} as desired.
This concludes our analysis of the arithmetic case and hence the proof of Theorem \ref{non-conc}.

\section{Remarks on $\SL_2(q)$.}

All of our arguments go through (and are considerably simpler) for $\SL_2(q)$, $q=p^n$, $p$ prime, $n\geq 1$. It is, however, already known due to work of Lubotzky \cite{lubotzky} (referenced in \cite{pnas}) that the groups $\SL_2(q)$ are uniformly expanders (with varying $p$ and $n$) for some rather explicit sets of three generators and their inverses.  The methods of this paper, then, provide an alternate proof of Lubotzky's result. There are perhaps three good reasons for wishing to have such an alternative proof. Firstly, we avoid the use of any deep number-theoretic information. Secondly, we can get two generators rather than three. Thirdly, the result itself is of paramount importance in the proof of the main theorem of Kassabov-Lubotzky-Nikolov \cite{pnas}.

Everything goes through in much the same way as for the Suzuki groups. The ``Helfgott-type" result for $\SL_2(q)$ (analogous to Theorem \ref{helf}) was obtained by Helfgott himself \cite{helfgott} for $q$ prime, and was subsequently generalized to arbitrary $q$ in Oren Dinai's thesis (see \cite{dinai}). As for the proof of the non-concentration estimate, the algebraic case involves, apart from some subgroups of bounded order ($\leq 60$), only 2-step solvable groups rather than 3-step. We refer the reader to Dickson's book \cite[XII, 260]{dickson} for a careful description of the subgroup structure of $\SL_2(q)$. Finally, there is a much more elementary argument for the arithmetic case, which we now give.

Recall that the aim is to show that, for a proportion $1 - o_{q \rightarrow \infty}(1)$ of pairs $a,b \in G$, we have
\begin{equation}\label{non-conc2} \mu_{a,b}^{(n)}(x^{-1} \SL_2(q_0) x) < q^{-\delta}\end{equation} for some $\delta > 0$ and for all $n \geq C \log q$, uniformly for all $x \in G$ and all $q_0 = q^{1/r}$, $r > 1$. In $\SL_2$, the characteristic polynomial is determined by the trace, so we can proceed exactly as in the last section with the trace in place of the $c_i$'s, except now there is a somewhat easier way to obtain a bound
\begin{equation}\label{trace-bound} \P_{a,b \in \SL_2(q)}( \tr(w(a,b)) = x) \ll q^{-1+ \eps},\end{equation} which suffices for our purposes in exactly the same way that Corollary \ref{sigma_1-cor} and the bound $(\ref{sigma_2})$ did in the previous section.
Indeed, suppose that
\[ a = \begin{pmatrix} t_1 & t_2 \\ t_3 & \frac{t_2 t_3 + 1}{t_1}   \end{pmatrix}, b = \begin{pmatrix} u_1 & u_2 \\ u_3 & \frac{u_2 u_3 + 1}{u_1}   \end{pmatrix} \] with $t_1, u_1 \neq 0$. For a fixed word $w$ of length $O(\log q)$, the condition $\tr(w(a,b)) = x$ can be written (by clearing denominators $t_1, u_1$) as a polynomial of degree $O(\log q)$ in the variables $t_i, u_i$. By Borel's theorem \cite{borel} (see also \cite{larsen}), the word map $w : \SL_2 \times \SL_2 \rightarrow \SL_2$ is a dominant map as soon as $w$ is non-trivial. This implies that our polynomial is not constant. Thus it is not hard to see that the number of solutions to it is $O(q^5 \log q)$ by a somewhat simpler, untwisted, version of the counting argument given in Proposition \ref{simple-twist-bound} above. The number of pairs $a,b$ in which the top left entry of either $a$ or $b$ is zero is clearly $O(q^5)$, and this completes the proof of $(\ref{trace-bound})$.

To get $(\ref{non-conc2})$ it suffices to observe that the subset of elements in $\F_q$ which lie in a proper subfield has size $O(\sqrt{q})$. This completes the argument.

\appendix

\section{A result of G.~A.~Jones}

\label{jones-app}

In this appendix, we prove Lemmas \ref{zariski-dense} and \ref{jones-result} following G. Jones' paper \cite{jones}.

We keep the notation of Section \ref{suzuki-sec}. Let $q=2^{2n+1}$ and let $k = \overline{\F_2}$ be the algebraic closure of $\F_2$. We will view $\Sp_4(k)$ as a closed affine algebraic subset of $4 \times 4$ matrices over $k$, i.e. as a subset of $k^{16}$. We view it as an algebraic group endowed with the Zariski topology. Let $f$ be a polynomial in 16 variables over $k$. We begin by establishing the following lemma, which forms the heart of Jones' argument.

\begin{lemma}\label{jones-lemma} There is a constant $c>0$ such that if $f$ has degree at most $c \sqrt{q}$, and $f$ vanishes identically on $\Sz(q)\setminus B$, then $f$ vanishes identically on $\Sp_4(k)$.
\end{lemma}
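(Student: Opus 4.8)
The plan is to show that a polynomial $f$ vanishing on the comparatively small set $\Sz(q) \setminus B$ is already forced, by its low degree, to vanish on the whole $3$-dimensional (as a variety over $k$, though we think of it via its $\F_q$-points) group $\Sz(q)$, and then on all of $\Sp_4(k)$. The guiding principle is a Schwartz--Zippel / interpolation argument: the set $\Sz(q)\setminus B$ has roughly $q^5$ points, which is much larger than any degree bound of order $\sqrt{q}$ would allow a nonzero polynomial to vanish on, \emph{provided} one can exhibit enough ``independent directions'' inside $\Sz(q)$. Concretely, I would use the Bruhat-type parametrisation from Definition \ref{suz-def}: every element of $\Sz(q)\setminus B$ is uniquely of the form $U(\alpha,\beta)D(\gamma)TU(\alpha',\beta')$, so restricting $f$ to this locus gives a polynomial expression in the five parameters $\alpha,\beta,\gamma,\alpha',\beta' \in \F_q$ (with $\gamma \neq 0$), but one which is \emph{twisted}: the entries of $U(\alpha,\beta)$ involve both $\alpha$ and $\alpha^\theta$ (and similarly $\beta,\beta^\theta$, $\gamma,\gamma^\theta$), where $\theta = 2^{n+1}$. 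Thus $f$ restricted to $\Sz(q)\setminus B$ becomes $P(\alpha,\beta,\gamma,\alpha',\beta',\alpha^\theta,\beta^\theta,\gamma^\theta,\alpha'^\theta,\beta'^\theta)$ for a polynomial $P$ whose degree in each variable is $O(\deg f)$.

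The first main step is therefore: if $f$ has degree $\le c\sqrt q$ and vanishes on $\Sz(q)\setminus B$, then the twisted polynomial $P$ vanishes for all choices of the parameters in $\F_q$ (with $\gamma\in\F_q^\times$, a condition one removes by clearing a power of $\gamma$). Now apply Lemma \ref{simple-twist-bound} (the Schwartz--Zippel-type lemma): with $k=5$ parameters and per-variable degree $d = O(\sqrt q)$, that lemma says that either $P$ vanishes identically as a polynomial (i.e., $f$ vanishes on the Zariski closure of the image of the parametrisation), or the proportion of parameter tuples on which it vanishes is $O(5 d q^{-1/2}) = O(c)$, which is $<1$ once the constant $c$ is chosen small enough. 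Since by hypothesis $P$ vanishes on \emph{every} tuple, the second alternative is impossible, so $P$ vanishes identically; equivalently $f$ vanishes on the Zariski closure $Z$ of $\Sz(q)\setminus B$ inside $\Sp_4(k)$. (One must be slightly careful about the excluded locus $\gamma=0$ and about points of $B$, but these lie in a proper subvariety and do not affect the closure.)

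The second main step is to identify $Z$ with all of $\Sp_4(k)$. Here I would invoke the (sufficient) Zariski density of $\Sz(q)$ in $\Sp_4(\overline{\F_2})$: concretely, the parametrisation $U(\alpha,\beta)D(\gamma)TU(\alpha',\beta')$, as $(\alpha,\beta,\gamma,\alpha',\beta')$ ranges over $k$ rather than $\F_q$, is a morphism whose image is Zariski-dense in the big Bruhat cell of $\Sp_4(k)$ relative to the appropriate (twisted) structure; alternatively, one uses that $\Sz$ over $k$ is a connected algebraic group of dimension $3$ which is not contained in any proper algebraic subgroup of $\Sp_4(k)$ of bounded complexity, so that the only polynomials vanishing on all its $\F_q$-points for infinitely many $q$, of bounded degree, are those vanishing on $\Sp_4(k)$. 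Since $f$ has a fixed degree bound $c\sqrt q$ that grows with $q$, and vanishes on the $\F_q$-points $\Sz(q)\setminus B$ whose Zariski closure (over varying $q$) fills out the algebraic Suzuki group, $f$ must vanish on that algebraic group, and hence — since $\Sz$ is not contained in a proper subgroup — on $\Sp_4(k)$ itself.

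The step I expect to be the main obstacle is the second one: passing from ``$f$ vanishes on the Zariski closure of the $\F_q$-points $\Sz(q)\setminus B$'' to ``$f$ vanishes on $\Sp_4(k)$''. The issue is that for a \emph{fixed} $q$, $\Sz(q)\setminus B$ is a finite set whose Zariski closure is itself (a finite set of points), not the algebraic Suzuki group — so one cannot argue for a single $q$. The real content, which is exactly Jones' insight, is the twisted Schwartz--Zippel count of Step 1 combined with the fact that $\Sp_4$ is connected and the twisted parametrisation is ``algebraically independent enough'': one has to check that the polynomial identities forced by vanishing on $\F_q$-points, once the degree is below $c\sqrt q$, propagate to polynomial identities over $k$, and then that the resulting variety containing $\Sz$ must be all of $\Sp_4$ because $\Sz$ generates $\Sp_4$ as an algebraic group (it is not contained in any proper algebraic subgroup). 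Getting the bookkeeping right on the twisting — in particular that the map $x\mapsto x^\theta$ and $x\mapsto x$ are ``independent'' over $\F_q$ in the precise sense Lemma \ref{simple-twist-bound} quantifies — is where the constant $c$ and the $\sqrt q$ come from, and is the technical heart of the argument.
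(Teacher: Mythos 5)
There is a genuine gap, and it is exactly at the point you flag as the ``main obstacle''. Your Step 1 does not actually produce any new information: the first alternative of Lemma \ref{simple-twist-bound} is that the \emph{twisted evaluation} $P(x_1,\dots,x_k,x_1^\theta,\dots,x_k^\theta)$ vanishes at every $\F_q$-point, not that $P$ is the zero polynomial; since the hypothesis of the lemma already says $f$ vanishes at every point of $\Sz(q)\setminus B$, concluding ``we are in the first alternative'' returns you to where you started, and (as you yourself note) the Zariski closure of this finite point set is just the point set. Your Step 2 then cannot repair this: appealing to Lemma \ref{zariski-dense} is circular, because in the paper that density statement is \emph{deduced from} the present lemma; and there is no ``connected algebraic Suzuki group of dimension $3$ over $k$'' sitting inside $\Sp_4(k)$ whose proper-subgroup avoidance one could invoke -- the Suzuki group is the finite fixed-point set of a twisted endomorphism, not a positive-dimensional closed subgroup, so the proposed density/propagation-over-varying-$q$ argument has no object to run on (and the lemma must in any case be proved for each fixed $q$, with a degree bound depending on that $q$).

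The missing idea is the untwisting step. One works with the ten-parameter big Bruhat cell $B_0TB_0$ of $\Sp_4(k)$, parametrised by $u(a,b,\alpha,\beta)d(c,\gamma)Tu(a',b',\alpha',\beta')$: after clearing denominators, $f$ restricted to this cell becomes a polynomial $P$ in the \emph{ten independent} variables $a,b,\alpha,\beta,c,\gamma,a',b',\alpha',\beta'$ of degree at most $2M$ in each, with $M\leq q^{1/2}/10$. The Suzuki points correspond to the substitution $a=\alpha^\theta$, $b=\beta^\theta$, $c=\gamma^\theta$, etc., yielding a five-variable polynomial $Q$ of per-variable degree at most $2M(\theta+1)$; since $10M(\theta+1)<q$, ordinary (untwisted) Schwartz--Zippel forces $Q$ to be \emph{formally} zero. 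The crux is then that because $2M<\theta$, distinct monomials of $P$ remain distinct after the substitution (uniqueness of the base-$\theta$ digit pair $(m_a,m_\alpha)$ with $m_a,m_\alpha<\theta$), so $Q\equiv 0$ forces $P\equiv 0$ as a polynomial in ten variables; hence $f$ vanishes on the big cell, which is Zariski-open and dense in $\Sp_4(k)$, and therefore on all of $\Sp_4(k)$. This monomial-separation argument is both the source of the $\sqrt q$ threshold and the step your proposal leaves unproved.
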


\begin{proof} Suppose $f$ has degree at most $M:=q^{1/2}/10$ is each of the 16 variables. Recall the parametrisation of the (lower triangular) Borel subgroup $B_0$ of $\Sp_4(k)$ as $u(a,b,\alpha,\beta)d(c,\gamma)$. The set $B_0TB_0$ can be parametrised by matrices of the form $u(a,b,\alpha,\beta)d(c,\gamma)Tu(a',b',\alpha',\beta')$. It is the so-called ``big-cell" of the Bruhat decomposition of $\Sp_4(k)$. It is Zariski-open and hence Zariski-dense in $\Sp_4(k)$. Hence if $f$ vanishes entirely on $B_0TB_0$, it must vanish entirely on $\Sp_4(k)$. After multiplying $f$ by $(\gamma c)^M$ in order to clear denominators, we obtain a polynomial $P=P(a,b,\alpha,\beta, c,\gamma, a', b', \alpha', \beta')$ of degree at most $2M$ in each of the ten variables, with the property that $P = 0$ if and only if $\gamma c=0$ or if $f = 0$. It particular if $P$ vanishes on $k^{10}$, i.e. is the zero polynomial, then $f$ vanishes identically on $\Sp_4(k)$.

Recall from Definition \ref{suz-def} that any element $g \in \Sz(q)\setminus B$ can parametrised as \[ g=u(\alpha^{\theta}, \beta^{\theta}, \alpha, \beta)d(\gamma^{\theta},\gamma) T u(\alpha'^{\theta},\beta'^{\theta},\alpha',\beta').\] Let $Q$ be the polynomial in 5 variables defined as \[Q(\alpha,\beta, \gamma, \alpha', \beta')=P(\alpha^{\theta}, \beta^{\theta}, \alpha, \beta, \gamma^{\theta},\gamma, \alpha'^{\theta},\beta'^{\theta},\alpha',\beta').\]  Since (by assumption) $f$ vanishes on $\Sz(q) \setminus B$, $Q$ must vanish identically for all values of $\alpha, \beta, \gamma, \alpha', \beta'$ taken in $\F_q$. We shall prove that, due to the upper bound on $M$, this forces the polynomial $P(a,b,\alpha,\beta, c,$ $\gamma, a', b', \alpha', \beta')$ to be the zero polynomial and that will complete the argument.

Note that $Q$ has degree at most $2M(\theta+1)$ in each of its 5 variables. Since $10M(\theta+1) < q$, $Q$ must take a non-zero value on $\F_q^5$ unless $Q$ is formally zero. This follows from the well-known fact that any non-zero polynomial in $k$ variables over $\F_q$ of degree at most $d$ in each variable has at most $kdq^{k-1}$ solutions in $\F_q^k$, by the Schwartz-Zippel lemma (which one can prove by an induction on dimension argument similar to that used to prove Lemma \ref{simple-twist-bound}).  Therefore $Q$ is formally zero.

Suppose now that $P$ is formally non-trivial. Then there must be two distinct monomials $m_1=a^{m^1_a}b^{m^1_b}\alpha^{m^1_\alpha} \ldots c'^{m^1_{c'}}$ and $m_2=a^{m^2_a}b^{m^2_b}\alpha^{m^2_\alpha} \ldots  c'^{m^2_{c'}}$ appearing in $P$ which coincide after the substitution $a \mapsto \alpha^{\theta}$, $b \mapsto \beta^{\theta}$, $c \mapsto \gamma^{\theta}$. This means that $\theta m^1_a + m^1_{\alpha} = \theta m^2_a + m^2_{\alpha}$ and similarly for $b$ and $c$. However, since $2M<\theta $, this forces $m^1_a=m^2_a$ and $m^1_{\alpha}=m^2_{\alpha}$ and similarly for $b, c, \beta$ and $\gamma$, that is to say $m_1=m_2$. This contradiction implies that $P$ is formally trivial and this ends the proof of the lemma.\end{proof}

We are now in a position to deduce Lemma \ref{zariski-dense}, whose statement we recall now.

\begin{zariski-dense-repeat}
The Suzuki group $\Sz(q)$ is not contained in any proper algebraic subgroup of $\Sp_4(k)$ of complexity $M_q$, where $M_q \rightarrow \infty$ as $q \rightarrow \infty$.
\end{zariski-dense-repeat}

\begin{proof} Let $V$ be a proper closed subvariety of $\Sp_4(k)$ of complexity at most $M$ containing $\Sz(q)$. Then by the definition of complexity (cf. \cite[Section 3]{bgt}), there is a polynomial $f$ on 16 variables over $k$ and of degree at most $M$, which does not vanish identically on $\Sp_4(k)$ yet is identically zero on $V$. We are thus in a position to apply the above lemma an conclude that $M\geq c q^{1/2}$.
\end{proof}

Our other business is to establish Lemma \ref{jones-result}.

\begin{jones-result-repeat}
Suppose that $w$ is some word in the free group $F_2$, and that $w(a,b) = \id$ identically on $\Sz(q) \setminus B$. Then $w$ has length at least $c\sqrt{q}$ for some absolute constant $c > 0$.
\end{jones-result-repeat}
\begin{proof} Suppose not. The relation $w(a,b)=\id$ can be written as 16 polynomial equations of degree $\ll q^{1/2}$ in the 16 matrix coordinates. Applying Lemma \ref{jones-lemma} to each of them, we conclude that $w(a,b)=\id$ for every $a$ and $b$ in $\Sp_4(k)$. We then observe that $\Sp_4(k)$ contains a closed algebraic subgroup isomorphic to $\SL_2(k)$ (for example the subgroup of $\Sp_4$ fixing the vectors $e_1$ and $e_4$). On the other hand it follows from Borel's result\footnote{This particular fact may be also be proven more elementarily by a ping-pong argument, noting that $w$ would also have to vanish on $\SL_2(k(t)) \times \SL_2(k(t))$, where $t$ is an indeterminate. This avoids an appeal to the Tits Alternative.} \cite{borel} that $w(a,b) - \id$ cannot vanish identically on $\SL_2(k) \times \SL_2(k)$ unless $w$ is trivial. \end{proof}

\section{Random Cayley graphs of $\Sz(q)$ have large girth} \label{girth-app}

Our aim in this appendix is to supply a self-contained proof of Lemma \ref{girth}.
The argument is basically the same as that in \cite{ghssv}, only we use elementary estimates instead of the deep model-theoretic work of Hrushovski. We first recall the statement of the lemma.

\begin{girth-repeat}
Let $G = \Sz(q)$. There is an absolute constant $\kappa > 0$ such that, with probability $1 - o_{q\rightarrow \infty}(1)$, a randomly chosen pair $a,b \in G$ will be such that $w(a,b) \neq id$ for all nontrivial words $w$ in the free group $F_2$ with length at most $\kappa\log q$.
\end{girth-repeat}
\begin{proof}
Let $\kappa > 0$ be a quantity to be specified later. The number of nontrivial words $w \in F_2$ of length at most $\kappa\log q$ is  $\ll q^{\kappa \log 3}$. For each of them, let us estimate the probability that a random pair of elements $a,b \in \Sz(q)$ satisfies $w(a,b) = \id$. The probability that either $a$ or $b$ lies in the subgroup $B$ (cf. Definition \ref{suz-def}) is $O(q^{-2})$ and will be ignored. The other elements may be parametrised as $U(\alpha, \beta)D(\gamma) T U(\alpha',\beta')$. Parametrising $a$ and $b$ by ten variables $x_1,\dots,x_{10}$, the equation $w(a,b) = \id$ is equivalent to sixteen polynomial equations
\[ P_{ij}(x_1,\dots, x_{10}, x^{\theta}_1,\dots, x^{\theta}_{10}) = 0,\] one for each matrix entry, where each $P_{ij}$ has degree $O(\log q)$ in each variable (we have cleared the denominators $\gamma^{-1}$ and $\gamma^{-\theta-1}$ appearing in the expression by multiplying by a $O(\log q)$ power of $\gamma$ and $\gamma^{\theta}$ to get $P_{ij}$).

By Lemma \ref{jones-result} we do not have $w(a,b) = \id$ identically for $a,b \in \Sz(q) \setminus B$, and so at least one of these polynomial equations is nontrivial.  By Lemma \ref{simple-twist-bound}, it follows that
\[ \P_{a,b}(a,b \notin B, w(a,b) = \id) \ll q^{-1/2} \log q.\]
Summing over $w$, one sees that the probability that a randomly selected pair $a,b$ will satisfy \emph{any} word of length $\kappa \log q$ is bounded by $O(q^{\kappa \log 3 - 1/2}\log q )$. Choosing $\kappa < 1/2 \log 3$, the result follows immediately.
\end{proof}

As we remarked in the overview, our lower bound for the girth is precisely half that of \cite{ghssv}. The reason for this is the rather crude bound on the number of solutions to $p(x,x^{\theta}) = 0$ that we employed during the proof of Lemma \ref{simple-twist-bound}. By employing the following lemma instead (with appropriate modifications of Lemma \ref{simple-twist-bound}) we may recover the bound of \cite{ghssv}. This lemma is plausibly of independent interest.

\begin{lemma}\label{harder-twist}
Let $q = 2^{2n + 1}$ and write $\theta := 2^{n+1}$. Let $p(x,y) \in \F_q[x,y]$ be a nontrivial polynomial with degree at most $d$ in each of its variables. Then the number of solutions to $p(x, x^{\theta}) = 0$ with $x \in \F_q$ is at most $2d^2$.
\end{lemma}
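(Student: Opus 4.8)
The plan is to exploit the fact that $x \mapsto x^\theta$ is an \emph{additive} automorphism of $\F_q$, whose square is the Frobenius $x \mapsto x^2$, and to reduce the problem to counting the intersection of two plane curves. First I would observe that the solution set in $\F_q$ of $p(x,x^\theta)=0$ is contained in the $\F_q$-points of the affine curve $C_1 = \{(x,y) : p(x,y)=0\}$, and also in the $\F_q$-points of a second curve $C_2$ coming from applying $\theta$ to the relation $y = x^\theta$: since $(x^\theta)^\theta = x^2$, the relation $y=x^\theta$ forces $y^\theta = x^2$, i.e. every solution also lies on $C_2 = \{(x,y) : q(x,y):= p^{(\theta)}(y, x^2)=0\}$, where $p^{(\theta)}$ is obtained from $p$ by applying $\theta$ to each coefficient (this is legitimate because $\theta$ is a field automorphism). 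Thus the solution set injects into $C_1(\F_q) \cap C_2(\F_q)$.

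The key step is then to check that $C_1$ and $C_2$ share no common component, so that B\'ezout's theorem bounds the size of the intersection. Here one uses the degree bookkeeping: $p(x,y)$ has degree at most $d$ in each variable, hence total degree at most $2d$; and $q(x,y) = p^{(\theta)}(y,x^2)$ has degree at most $d$ in $y$ and at most $2d$ in $x$, hence total degree at most $3d$ — but more usefully, its Newton polygon is very different from that of $p$, since the roles of $x$ and $y$ have been swapped and $x$ has been squared. If $C_1$ and $C_2$ had a common irreducible component $D$, then on $D$ both $y = $ (an algebraic function behaving like $x^\theta$) and $y^\theta = x^2$ would hold, which over the algebraic closure pins the function field of $D$ down so tightly that one gets a contradiction with $p$ being nontrivial of bounded degree — essentially the same monomial-matching argument used at the end of the proof of Lemma \ref{jones-lemma}, comparing exponents modulo the size of $\theta$. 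Once no common component exists, B\'ezout gives $|C_1 \cap C_2| \leq (\deg C_1)(\deg C_2)$; being slightly more careful and using that $p$ has degree $\leq d$ in \emph{each} variable (so one can work with a bidegree/Newton-polygon version of B\'ezout, or factor out the $x^2$ cleanly) yields the clean bound $2d^2$.

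The main obstacle I expect is the common-component analysis: ruling out that $C_1$ and $C_2$ share an irreducible component requires knowing that the map $x \mapsto x^\theta$ is not "algebraic of low degree" in a way compatible with $p$, and making this precise over $\overline{\F_q}$ (rather than just over $\F_q$) is where care is needed — one cannot simply say $x^\theta$ is not a polynomial, since over $\F_q$ it \emph{is} the polynomial $x^{2^{n+1}}$, and that polynomial has large degree, which is exactly the phenomenon the lemma is trying to beat. The resolution is that the curve $C_2$ we constructed has \emph{small} degree ($O(d)$), so any shared component would have small degree, and then one runs the exponent-matching argument: a low-degree relation between $x$, $y=x^\theta$ and the squaring map is impossible because the three relevant "scales" ($1$, $\theta$, $\theta^2 = $ Frobenius-squared) are multiplicatively separated far beyond $d$. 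I would also need to dispose of the degenerate case where $p^{(\theta)}(y,x^2)$ vanishes identically or where $p$ is divisible by $x$ or by a constant in one variable, but these are routine. The remaining steps — setting up $C_2$, quoting B\'ezout, and assembling the bound $2d^2$ — are then short.
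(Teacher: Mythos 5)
There is a genuine gap at the step you yourself identify as the key one: the claim that $C_1=\{p(x,y)=0\}$ and $C_2=\{p^{(\theta)}(y,x^2)=0\}$ share no common component is simply false in general, and no monomial-matching argument can rescue it. The exponent-separation trick from Lemma \ref{jones-lemma} works there because a single polynomial of degree $<\theta$ in independent variables is evaluated along the substitution $a\mapsto\alpha^\theta$, so distinct monomials cannot collide; here, by contrast, the relation $y=x^\theta$ has already been eliminated, and you are comparing two polynomials of small degree in the same two variables -- $\theta$ survives only in the coefficients, so there are no widely separated exponent scales left to compare. For a concrete counterexample not covered by your ``degenerate'' caveats, take $p(x,y)=(x+y+1)(x^2+y+1)$ (coefficients in $\F_2$, so $p^{(\theta)}=p$). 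Then $p^{(\theta)}(y,x^2)=(x^2+y+1)(y^2+x^2+1)=(x^2+y+1)(x+y+1)^2=(x+y+1)\,p(x,y)$ in characteristic $2$, so \emph{every} component of $C_1$ is a component of $C_2$; even the trivial $p=xy$ already gives a common component. Since the whole proposal reduces to B\'ezout after excluding common components, the argument collapses exactly where the real difficulty lies. (Separately, even granting no common component, your degree bookkeeping gives B\'ezout bounds like $6d^2$ or $3d^2$, and the passage to the stated $2d^2$ is waved at rather than carried out; but that is minor compared to the main issue.)

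The paper's proof does not rule out common components -- it absorbs them. It sets $f=\gcd\bigl(p(x,y),\,p^{\theta}(y,x^2)\bigr)$ over $\overline{\F_q}$ (descended to $\F_q$), applies B\'ezout only to the coprime parts $p_1,P_1$, and observes that the remaining solutions of $p(x,x^\theta)=0$ satisfy $f(x,x^\theta)=0$ with $f$ of strictly smaller degree, so they are handled by induction; in the extreme case where $p$ divides $p^{\theta}(y,x^2)$, writing $p^{\theta}(y,x^2)\equiv g(x,y)p(x,y)$ and applying the substitution-and-raise-to-$\theta$ manipulation yields $p(x,y)\equiv g^{\theta}(y,x^2)g(x,y)$, forcing $p$ to be reducible and again permitting induction on the total degree. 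That inductive mechanism for the common-component case (including your counterexample, where the solutions on a shared component satisfy a lower-degree twisted equation) is precisely the content missing from your proposal, and you would need to supply it or something equivalent to make the argument work.
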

\begin{proof} Suppose that $p(x,x^{\theta}) = 0$. Raising to the power $\theta$ and recalling that $x \mapsto x^\theta$ is an automorphism with $(x^{\theta})^{\theta} = x^2$, we obtain
\begin{equation}\label{c1} p^{\theta}(x^{\theta}, x^2) = 0.  \end{equation} Here, $p^{\theta}$ means the polynomial obtained from $p$ by raising each coefficient to the power $\theta$. Motivated by this observation let us consider the more general problem of bounding above the number of solutions to
\[ p^{\theta}(y, x^2) =  p(x,y) = 0\] with $x,y \in \F_q$.
Write $P(x,y) = p^{\theta}(y,x^2)$; note that the total degree of $P$ is at most $2d$. Let $f(x,y)$ be the highest common factor of $p$ and $P$ in $\overline{\F_q}[x,y]$. Multiplying by a suitable unit, we may take $f(x,y)$ to lie in $\F_q[x,y]$: indeed any $\mbox{Gal}(\overline{\F_q}/\F_q)$-conjugate of $f(x,y)$ is also a common factor of $p$ and $P$ and so every irreducible factor of $f(x,y)$ comes together with all of its conjugates, and the product of those is defined over $\F_q$. Write $p = fp_1$ and $P = f P_1$.

Write $d' $ for the total degree of $f$, and suppose that $d' < d$. By Bezout's theorem, the number of solutions to $p_1(x,y) = P_1(x,y) = 0$ is at most $(2d - d')(d - d')$.  All other solutions to $p(x,x^{\theta}) = 0$ must also satisfy $f(x, x^{\theta}) = 0$, and so we may thus proceed inductively to conclude that the total number of $x \in \F_q$ with $p(x,x^{\theta}) = 0$ is at most
\[ (2d - d')(d - d') + 2d^{\prime 2} \leq 2d^2,\] as required.

If $d' = d$ then we must proceed differently. In this case $p(x,y)$ divides $P(x,y)$, and so we may write
\[ P(x,y) = p^{\theta}(y, x^2) \equiv g(x,y) p(x,y)\] for some polynomial $g(x,y) \in \F_q[x,y]$. Here, and henceforth, we use $\equiv$ to denote equivalence of polynomials (and not just expressions that are equal for all substitutions of variables from $\F_q$). Making an obvious substitution, we have
\[ p^{\theta}(x^2, y^2) \equiv g(y,x^2) p(y,x^2).\]
Raising both sides to the power $\theta$ then yields
\[ p^2(x^{2\theta}, y^{2\theta}) \equiv g^{\theta}(y^{\theta}, x^{2\theta}) p^{\theta}(y^{\theta}, x^{2\theta})\] and hence that
\[ p^2(x^2, y^2) \equiv g^{\theta}(y,x^2) p^{\theta}(y, x^2).\]
 It follows that
\[ p(x,y)^2 \equiv p^2(x^2, y^2) \equiv g^{\theta}(y,x^2) p^{\theta}(y, x^2) \equiv g^{\theta}(y,x^2) g(x,y) p(x,y),\] and so
\[ p(x,y) \equiv g^{\theta}(y,x^2) g(x,y).\]
In particular $p$ is reducible over $\F_q$ and we may proceed by induction on the total degree of $p$.
\end{proof}

The second author would like to thank Michael Larsen for helpful conversations in connection with this lemma.

\end{document}